 \definecolor{darkslategray}{rgb}{0.18, 0.31, 0.31}
\definecolor{ganador}{HTML}{D8345F}
\theoremstyle{theorem}
\newtheorem{theorem}{Theorem}[section]
\newtheorem{corollary}[theorem]{Corollary}
\newtheorem{proposition}[theorem]{Proposition}
\theoremstyle{definition}
\newtheorem{definition}[theorem]{Definition}
\newtheorem{example}[theorem]{Example}
\newtheorem*{rep@theorem}{\rep@title}
\newcommand{\newreptheorem}[2]{%
\newenvironment{rep#1}[1]{%
 \def\rep@title{#2 \ref{##1}}%
 \begin{rep@theorem}}%
 {\end{rep@theorem}}}
\newcommand{\Z}{{\mathbb{Z}}}
\newcommand{\R}{{\mathbb{R}}}
\newcommand{\E}{\overline{E}}
\newcommand{\1}{\overline{1}}
\newcommand{\2}{\overline{2}}
\newcommand{\3}{\overline{3}}
\newcommand{\4}{\overline{4}}
\newcommand{\5}{\overline{5}}
\newcommand{\6}{\overline{6}}
\newcommand{\7}{\overline{7}}
\newcommand{\n}{\overline{n}}
\newcommand{\ii}{\overline{i}}
\newcommand{\BB}{\mathsf{B}}
\newcommand{\RR}{\mathsf{R}}
\renewcommand{\r}{\mathsf{r}}
\newcommand{\bt}{\bar{t}}
\newcommand{\conv}{\mathsf{conv}}
\newcommand{\cone}{\mathsf{cone}}
\renewcommand{\int}{\mathsf{int}}
\newcommand{\des}{\mathsf{des}}
\newcommand{\asc}{\mathsf{asc}}
\newcommand{\rev}{\mathsf{rev}}
\renewcommand{\deg}{\mathsf{deg}}
\renewcommand{\d}{{\mathsf{d}}}
\newcommand{\e}{{\mathsf{e}}}
\newcommand{\f}{{\mathsf{f}}}
\newcommand{\g}{{\mathsf{g}}}
\newcommand{\uu}{{\mathsf{u}}}
\newcommand{\vv}{{\mathsf{v}}}
\newcommand{\ww}{{\mathsf{w}}}
\newcommand{\one}{{\mathsf{1}}}
\newcommand{\B}{{\mathsf{B}}}
\newcommand{\M}{{\mathsf{M}}}
\newcommand{\N}{{\mathsf{N}}}
\newcommand{\T}{{\mathsf{T}}}
\newcommand{\NN}{{\mathcal{N}}}
\definecolor{darkgreen}{rgb}{0,0.5,0}
\definecolor{brown}{rgb}{0.5,0.3,0}
\definecolor{darkblue}{rgb}{0,0,0.8}
\definecolor{darkred}{rgb}{0.5,0,0}
\definecolor{lightmagenta}{rgb}{.5,0,.5}
\definecolor{magenta}{rgb}{1,0,1}
\definecolor{cyan}{rgb}{0,0.9,0.9}
\definecolor{grey}{rgb}{0.5,0.5,0.5}
\definecolor{lightgrey}{rgb}{0.7,0.7,0.7}
\definecolor{yellow}{rgb}{0.9, 0.9, 0}
\newcommand{\magenta}{\textcolor{magenta}}
\newcommand{\cyan}{\textcolor{cyan}}
\newcommand{\grey}{\textcolor{grey}}
\begin{document}

\title{
The bipermutahedron}

\author{
\textsf{Federico Ardila\footnote{\textsf{San Francisco State University, Universidad de Los Andes; federico@sfsu.edu. Partially supported by NSF grant DMS-1855610 and Simons Fellowship 613384.}}}
}

\date{}

\maketitle

%
%
\begin{abstract} 
The harmonic polytope and the bipermutahedron are two related polytopes which arose in Ardila, Denham, and Huh's work on the Lagrangian geometry of matroids. We study the bipermutahedron. We show that its faces are in bijection with the vertex-labeled and edge-labeled multigraphs with no isolated vertices; the generating function for its $f$-vector is a simple evaluation of the three variable Rogers--Ramanujan function. 

We show that the $h$-polynomial of the bipermutahedral fan is the biEulerian polynomial, which counts bipermutations according to their number of descents.
We construct a unimodular triangulation of the product $\Delta \times \cdots \times \Delta$ of triangles that is combinatorially equivalent to (the triple cone over) the bipermutahedral fan. Ehrhart theory then gives us a formula for the biEulerian polynomial, which we use to show that this polynomial is real-rooted and that the $h$-vector of the bipermutahedral fan is log-concave and unimodal.

We describe all the deformations of the bipermutahedron; that is, the ample cone of the bipermutahedral toric variety. We prove that among all polytopes in this family, the bipermutahedron has the largest possible symmetry group. Finally, we show that the Minkowski quotient of the bipermutahedron and the harmonic polytope equals 2.

\end{abstract}

\section{Introduction}

Motivated by the Lagrangian geometry of conormal varieties,
in joint work with Graham Denham and June Huh \cite{ArdilaDenhamHuh1}, we introduced the \emph{conormal fan} $\Sigma_{\M, \M^\perp}$ of a matroid $\M$ -- a Lagrangian counterpart of the Bergman fan $\Sigma_\M$ \cite{AK}. 
We used the conormal fan $\Sigma_{\M, \M^\perp}$ to give new geometric interpretations of the Chern-Schwartz-MacPherson cycle of a matroid $\M$ \cite{LRS} and of the $h$-vectors of the broken circuit complex $BC(\M)$ and independence complex $I(\M)$ of $\M$. Combined with tools from combinatorial Hodge theory, we used this geometric framework to prove that these $h$-vectors are log-concave,  as conjectured by Brylawski and Dawson \cite{Brylawski, Dawson} in the early 1980s. 

In our construction of the conormal fan $\Sigma_{\M,\M^\perp}$ with Denham and Huh, we encountered two related polytopes associated to a positive integer $n$: the \emph{harmonic polytope} $H_{n,n}$ and the \emph{bipermutahedron} $\Pi_{n,n}$. In particular, 
the conormal fans $\Sigma_{\M,\M^\perp}$ of all matroids $\M$ on $[n]$ live inside a fan called the \emph{bipermutahedral fan} $\Sigma_{n,n}$, and the fact that this fan is projective -- that is, the existence of the bipermutahedron --  is a fundamental step in our proof of Brylawski and Dawson's log-concavity conjectures \cite{ArdilaDenhamHuh1}.

The harmonic polytope $H_{n,n}$ is studied in our paper \cite{ArdilaEscobar} with Laura Escobar. The bipermutahedron $\Pi_{n,n}$ is the main object of study of this paper. Its name derives from the fact that its vertices are in bijection with the \emph{bipermutations} of $[n]$, which are the sequences of length $2n-1$ containing one element of $[n]$ exactly once and every other element of $[n]$ exactly twice.

Our main results are the following:

\begin{itemize}
\item Proposition \ref{prop:fvector} shows that the $(d-2)$-faces of the $n$th bipermutahedron $\Pi_{n,n}$ are in bijection with the multigraphs on vertex set $[d]$ and edge set $[n]$ with no isolated vertices.

\item Theorem \ref{th:fseries} shows that the generating function for the face numbers of  bipermutahedra is a simple evaluation of the three variable Rogers--Ramanujan function.

\item Theorem \ref{th:biEulerian} shows that the $h$-polynomial of the bipermutahedral fan $\Sigma_{n,n}$ is the $n$th biEulerian polynomial, which enumerates bipermutations according to their number of descents.

\item Theorem \ref{th:triangulation} constructs a unimodular triangulation of the product $\Delta^n$ of $n$ standard triangles that is combinatorially isomorphic to (a triple cone over) the bipermutahedral fan $\Sigma_{n,n}$.

\item
Theorem \ref{th:biEulerianseries} uses the Ehrhart theory of $\Delta^n$ to express the $n$th biEulerian polynomial $B_n(x)$ as the numerator of the generating function of the sequence ${k \choose 2}^n$.

\item
Theorem \ref{th:realrooted} shows that the biEulerian polynomial $B_n(x)$ is real-rooted, and hence that  the $h$-vector of the bipermutahedral fan is log-concave and unimodal.

\item
Proposition \ref{prop:symmetry} shows that among the polytopes whose normal fan is the bipermutahedral fan $\Sigma_{n,n}$, the bipermutahedron $\Pi_{n,n}$ has the largest possible symmetry group.

\item
Proposition \ref{prop:deformations} describes all the polytopes whose normal fan is the bipermutahedral fan $\Sigma_{n,n}$. This is the ample cone of the bipermutahedral toric variety $X_{\Sigma_{n,n}}$.

\item
Theorem \ref{th:Pi/H} shows that the Minkowski quotient of the bipermutahedron and the harmonic polytope is $\Pi_{n,n}/H_{n,n} = 2$ in any dimension. 
This is the largest $\lambda$ for which $\lambda H_{n,n}$ is a Minkowski summand of $\Pi_{n,n}$.

\end{itemize}

\section{The bipermutahedral fan and the bipermutahedron} \label{sec;bipermutahedron}

In this section we recall the definition of the bipermutahedron and its (inner) normal fan, as introduced in  \cite{ArdilaDenhamHuh1}. Throughout the paper we fix a positive integer $n \geq 2$, and write $E=\{1,\ldots, n\}$.

\begin{definition}\label{def:biperm}
A \emph{bipermutation} on $E$ is a sequence $\B=b_1|\ldots|b_{2n-1}$ of elements of $E$, such that
\begin{enumerate} \itemsep 3pt
\item one element $k(\B)=k$ of $E$ appears exactly once in $\B$, and
\item every other element $i \neq k$ of $E$ appears exactly twice in $\B$, 
\end{enumerate}
\end{definition}

We will sometimes write $\B$ by writing the special element $k$ in bold, and writing $\overline{i}$ for the second occurrence of $i$ for each $i \neq k$. For example, we rewrite the bipermutation $2|3|2|1|3$ as $2|3|\2|{\bf 1}|\3$. We will use these two notations interchangeably.

There is a bijection between the permutations on $[n]$ and the permutations of $\{1,1,2,2,\ldots, n,n\}$: given a bipermutation $\B$ on $[n]$ whose non-repeated element is $k(\B) = k$, simply add another  $k$ at the end of $\B$. Therefore there are $(2n)!/2^n$ bipermutations on $[n]$.

We consider two copies of $\R^n$ with standard bases  $\{\e_i \, : \, i \in [n]\}$ and $\{\f_i \, : \, i \in [n]\}$, respectively. We also consider their dual spaces, with dual bases $\{\e^i \, : \, i \in [n]\}$ and $\{\f^i \, : \, i \in [n]\}$
For any subset $S$ of $[n]$, we write 
\[
\e_S=\sum_{i \in S} \e_i, \qquad \f_S=\sum_{i \in S} \f_i,
\]
and similarly for $\e^S$ and $\f^S$.
We also consider the pair of dual $(n-1)$-dimensional vector space 
\[
\M_n = \{ x \in \R^n \, : \, \sum_i x_i = 0\}, \qquad 
\N_n := \mathbb{R}^n/\R\e_E
\]
The bipermutahedron and its normal fan live in $\M_n \times \M_n$ and in $\N_n \times \N_n$, respectively. We begin by introducing the normal fan, which plays a central role in the Lagrangian geometry of matroids, since it contains the conormal fan of every matroid on $[n]$ \cite{ArdilaDenhamHuh1}.

\subsection{The bipermutahedral fan}

Let $p=(p_1,\ldots,p_n)$ be an $E$-tuple of points in $\mathbb{R}^2$. The \emph{supporting line} of $p$, denoted $\ell(p)$,  is the lowest  line of slope $-1$ containing a point in $p$. For each point $p_i$, the vertical and horizontal projections of $p_i$ onto $\ell(p)$ will be labelled $i$.
The \emph{bisequence of $p$}, denoted $\B(p)$,  is obtained by reading the labels on $\ell(p)$ from right to left. See Figure \ref{fig:config} for an illustration.

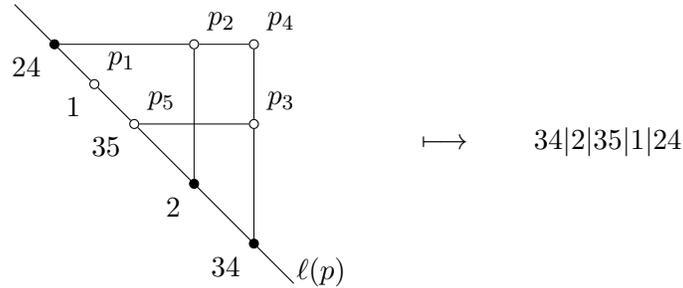
\begin{figure}[h]
\[
\begin{tikzpicture}[scale=0.75,baseline=(current bounding box.center),
plain/.style={circle,draw,inner sep=1.2pt,fill=white},
root/.style={circle,draw,inner sep=1.2pt,fill=black}]
\begin{scope}[rotate=-45]
\node (1) at (1,0) [root,label=below left:$24$] {};
\node (2) at (2,0) [plain,label=below left:$1$,label=above right:$p_1$] {};
\node (3) at (3,0) [plain,label=below left:$35$,label=above right:$p_5$] {};
\node (4) at (4.5,0) [root,label=below left:$2$] {};
\node (5) at (6,0) [root,label=below left:$34$, label=below right:$\,\,\,\,\, \, \ell(p)$] {};
\draw (0,0) -- (2) -- (3) -- (7,0);
\node (15) at (3.5,2.5) [plain, label=above right:$p_4$] {};
\node (14) at (2.75,1.75) [plain, label=above right:$p_2$] {};
\node (35) at (4.5,1.5) [plain, label=above right:$p_3$] {};
\draw[style=very thin] (1) -- (14) -- (4);
\draw[style=very thin] (3) -- (35) -- (5);
\draw[style=very thin] (14) -- (15) -- (35);
\end{scope}
\end{tikzpicture}
\qquad 
\longmapsto
\qquad 34|2|35|1|24
\]
\caption{An $E$-tuple of points $p=(p_1, \ldots, p_5)$ in the plane, their vertical and horizontal projections onto the supporting line $\ell(p)$. The corresponding bisequence is $\B(p) = 34|2|35|1|24$.}\label{fig:config}
\end{figure}

\begin{definition}\label{def:fan}
The \emph{bipermutohedral fan} $\Sigma_{E,E}$ is the configuration space of $E$-tuples of points in the real plane modulo simultaneous translation, stratified according to their bisequence.
\end{definition}

By letting the $i$th point in $p=(p_1, \ldots, p_n)$ have coordinates $p_i = (z_i, w_i)$, we may regard $p$ as a point in $\N_n \times \N_n$. Then if is proved in \cite{ArdilaDenhamHuh1} that  the bipermutahedral fan can be described alternatively as follows.

\begin{proposition} \label{prop:fan}
The bipermutahedral fan is the complete simplicial fan in $\N_n \times \N_n$ whose maximal cones are \begin{eqnarray*}
\sigma_{\B} &:=& \{(z,w) \in \N_n \times \N_n \, : \, \text{the numbers } z_1-z_k, \ldots, z_n-z_k, w_k-w_1, \ldots, w_k-w_n \\
&& \qquad  \qquad \qquad \qquad \quad \text{are weakly in the opposite order of the letters $1, \ldots, n, \1, \ldots, \n$ in $\B$\}}
\end{eqnarray*}
for each bipermutation $\B$ of $[n]$, where $k=k(\B)$ is the element appearing once in $\B$.
\end{proposition}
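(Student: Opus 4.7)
The plan is to read the bisequence $\B(p)$ directly off the coordinates of $p=(p_1,\ldots,p_n)$ with $p_i=(z_i,w_i)$, and then match the answer against the inequality description of $\sigma_{\B}$ in the statement. Throughout I exploit the translation invariance built into $\N_n\times\N_n$: in the interior of a maximal cone the supporting line $\ell(p)$ meets a unique point $p_k$, so I may normalize the representative to have $z_k=w_k=0$, in which case $\ell(p)$ is the line $z+w=0$.

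With this normalization the vertical projection of $p_i$ onto $\ell(p)$ lands at $(z_i,-z_i)$ and its horizontal projection at $(-w_i,w_i)$; parametrizing $\ell(p)$ by the $z$-coordinate, these projections have parameters $z_i$ and $-w_i$ respectively, so reading $\ell(p)$ from right to left amounts to listing these parameters in decreasing order. Using the natural convention that the letter $i$ in $\B$ labels the vertical projection of $p_i$ and the letter $\overline{i}$ labels its horizontal projection, the parameter attached to letter $i$ becomes $z_i-z_k$ and that of $\overline{i}$ becomes $w_k-w_i$ (both expressions now translation-invariant). Since both projections of $p_k$ collide at parameter $0$, the would-be letters $k$ and $\overline{k}$ merge into the single letter $k(\B)$. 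Hence $\B(p)$ is precisely the weak linear order of the $2n$-multiset $\{z_i-z_k\}_{i\in[n]}\cup\{w_k-w_i\}_{i\in[n]}$ read from largest to smallest, which is exactly the condition cutting out $\sigma_{\B}$.

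It remains to check the fan axioms. Completeness is immediate since any $(z,w)$, together with a choice of $k$ minimizing $z_i+w_i$ and a sorting of the $2n-1$ resulting numbers, produces some bipermutation $\B$ with $(z,w)\in\sigma_{\B}$. Each $\sigma_{\B}$ is cut out by $2n-2$ weak linear inequalities, one for each consecutive pair of letters of $\B$, so it is simplicial of the top dimension $2n-2=\dim(\N_n\times\N_n)$. Facets of $\sigma_{\B}$ arise by turning one of these inequalities into an equality, i.e., by merging two consecutive letters of $\B$ into a single block; the resulting coarsened bisequence indexes the common boundary of two adjacent maximal cones, giving the compatibility of face identifications required of a fan.

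The main obstacle I expect is the bookkeeping around the special letter $k$: one has to verify that the convention ``vertical $\leftrightarrow$ unbarred, horizontal $\leftrightarrow$ barred'' is consistent with the collision of $k$ and $\overline{k}$ into a single letter of $\B$, and that reading $\ell(p)$ right-to-left matches the ``weakly in the opposite order'' sign convention in the statement. Once these are pinned down the verification is a careful unpacking of definitions; well-definedness on $\N_n\times\N_n$ follows because the differences $z_i-z_k$ and $w_k-w_i$ are unchanged when we add a multiple of $\e_E$ to $z$ or $w$.
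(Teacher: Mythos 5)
First, a caveat about the comparison: the paper does not prove Proposition \ref{prop:fan} at all --- it quotes it from \cite{ArdilaDenhamHuh1} --- so there is no in-paper argument to measure you against, and your proposal has to stand on its own. Your central computation is the right one and surely the intended derivation: after normalizing the translation so that $z_k=w_k=0$ for the point $p_k$ on the supporting line, the line $\ell(p)$ is $z+w=0$, the vertical and horizontal projections of $p_i$ have line parameters $z_i-z_k$ and $w_k-w_i$, and reading right to left lists this multiset in weakly decreasing order, matching the sign convention in the statement (the example after the proposition confirms that earlier letters get larger values). One item you defer as ``bookkeeping'' is actually a fact you must prove, not a convention you may choose: that the unbarred (first) occurrence of $i$ is the vertical projection. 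It follows in one line from $\ell(p)$ being the \emph{lowest} line of slope $-1$: every $p_i$ satisfies $z_i+w_i\ge z_k+w_k$, i.e.\ $z_i-z_k\ge w_k-w_i$; conversely the inequalities of $\sigma_\B$ force these relations, so membership in $\sigma_\B$ forces $k(\B)$ to minimize $z_i+w_i$.

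The genuine gap is in the verification of the fan axioms. Simpliciality does not follow merely from ``cut out by $2n-2$ weak inequalities'': you need the $2n-2$ consecutive-difference functionals to be linearly independent, which holds because the functionals $z_i-z_k$, $w_k-w_i$ ($i\ne k$) are a basis of the dual of $\N_n\times\N_n$ and the chain of consecutive differences, anchored at the zero value sitting at the position of the letter $k$, is an invertible transform of that basis --- easy, but it must be said. More seriously, your face-to-face argument only treats two maximal cones sharing a facet (one merged adjacent pair); to have a fan you must show that \emph{every} pairwise intersection $\sigma_{\B_1}\cap\sigma_{\B_2}$ is a common face, including the case $k(\B_1)\neq k(\B_2)$, where the two inequality systems are written with respect to different reference elements and it is not a priori clear the orderings are even comparable. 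The clean route is the stratification you set up but never use: show that $(z,w)\in\sigma_\B$ if and only if $\B$ refines the bisequence of $(z,w)$ (using that any $k(\B)$ with $(z,w)\in\sigma_\B$ minimizes $z_i+w_i$, so all relevant orderings are computed from the same minimal value), and deduce that $\sigma_{\B_1}\cap\sigma_{\B_2}$ is exactly the face of each cone obtained by imposing the equalities forced by the other. As written, ``compatibility of face identifications required of a fan'' is asserted rather than proved, and that assertion is precisely the nontrivial content of the proposition.
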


For example, the maximal cone of the bipermutahedral fan $\Sigma_{4,4}$ corresponding to the bipermutation $2|3|4|2|4|1|1$ -- which we rewrite as $2|{\bf 3}|4|\2|\4|1|\1$ -- is given by the following inequalities
\[
\sigma_{2|3|4|2|4|1|1}: \qquad 
z_2-z_3 \geq
0 \geq
z_4-z_3 \geq
w_3-w_2 \geq
w_3-w_4 \geq
z_1-z_3 \geq
w_3-w_1.
\]

\subsection{Constructing the bipermutahedron}\label{sec:construction}

We now recall the construction of the bipermutahedron $\Pi_{n,n}$ from \cite{ArdilaDenhamHuh1}. For each bipermutation $\B$, we construct a vertex $v_\B$ in $\M_n \times \M_n$ as follows.

First, let $k=k(\B)$ be the element appearing only once in $\B$, and consider the word obtained by replacing the first and second occurrences of each $i \neq k$ with $i$ and $\ii$ respectively, and replacing $k$ with $k\overline{k}$. Then identify this word
with a bijection $\pi=\pi(\B)$:
\[
\pi(\B) 
\colon 
E \cup \E
\longrightarrow
\{-(2n-1), -(2n-3), \ldots, -3, -1, 1, 3, \ldots, (2n-3), (2n-1)\}
\]
that sends the letters of the word to $-(2n-1), \ldots, -1, 1, \ldots, (2n-1)$ in increasing order. For example, 
the bipermutation $2|3|4|2|4|1|1$ is sent to the bijection
\[
2|3|4|2|4|1|1 \longmapsto
23\34\2\41\1 \longmapsto
\pi = 
\left(
\begin{array}{rrrrrrrr}
2 & 3 & \3 & 4 & \2 & \4 & 1 & \1 \\
-7 & -5 & -3 & -1 & \,\,\,1 & \,\,\,3 & \,\,\,5 & \,\,\,7 \\
\end{array} \right)
\]
with $\pi(2) = -7, \pi(3) = -5, \ldots, \pi(\1) = 7$. 

Next, to the bijection $\pi$ we associate a vector $u_\pi =(x,y) \in \R^E \times \R^E$ with coordinates
  $x_i = \pi(i)$ and $y_i = -\pi(\ii)$ for $i \in E$.
%
Notice that $u_\pi$ is on the hyperplane $\sum_{i \in E} x_i - \sum_{i \in E} y_i = 0$, so we may define the number $s_\pi = \sum_{i \in E} x_i = \sum_{i \in E} y_i$.
Writing vectors $(x,y) \in \R^n \times \R^n$ in a $2 \times n$ table whose top and bottom rows are $x$ and $y$ respectively, we have, for example,
\[
u_{23\34\2\41\1} = 
\begin{array}{|rrrr|}
\hline
5 & -7 & -5 & -1 \\
-7 & -1 & 3 & -3 \\
\hline
\end{array} \, ,
\qquad
s_\pi = -8.
\]
Finally define the vertex
\[
v_\BB = u_{\pi(\BB)} - s_{\pi(\BB)}(\e^k + \f^k).
\]
For example,
\begin{eqnarray*}
v_{2|3|4|2|4|1|1} &=& u_{23\34\2\41\1} - s_{23\34\2\41\1}(\e^2+\f^2) \\ 
&=& 
\begin{array}{|rrrr|}
\hline
5 & -7 & -5 & -1 \\
-7 & -1 & 3 & -3 \\
\hline
\end{array} 
+ 8 \, \, 
\begin{array}{|rrrr|}
\hline
0 & 0 & 1 & 0 \\
0 & 0 & 1 & 0 \\
\hline
\end{array} 
= 
\begin{array}{|rrrr|}
\hline
5 & -7 & 3 & -1 \\
-7 & -1 & 11 & -3 \\
\hline
\end{array}
\end{eqnarray*}
The row sums of $v_\BB$ equal $0$, so $v_\BB \in \M_n \times \M_n$.

\begin{definition} \label{def:biperm}
The \emph{bipermutahedron} on $[n]$ is 
\[
\Pi_{n,n} := \conv \{v_\BB \, : \, \BB \text{ is a bipermutation on } [n]\} \subset  \M_n \times \M_n.
\]
\end{definition}

Recall that the \emph{(inner) normal fan} $\mathcal{N}(P)$ of a polytope $P$ in a vector space $V$ is the complete fan in the dual space $V^*$ whose maximal cones are
\[
\sigma_v = \{w \in V^* \, : \, w(v) \leq w(x) \text{ for all } x \in P\}
\]
for the vertices $v$ of $P$. The face poset of $\mathcal{N}(P)$ is anti-isomorphic to the face poset of $P$.

\begin{theorem} \cite{ArdilaDenhamHuh1}
The bipermutahedral fan is the normal fan of the bipermutahedron.
\end{theorem}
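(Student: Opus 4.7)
The plan is to show, for each bipermutation $\BB$ and each $(z,w) \in \sigma_{\BB}$, that $v_{\BB}$ minimizes the linear functional $(z,w) \mapsto \langle (z,w), v \rangle$ over $v \in \Pi_{n,n}$. Since Proposition~\ref{prop:fan} asserts that the cones $\sigma_{\BB}$ form a complete fan, this will simultaneously identify the $v_\BB$ as the vertices of $\Pi_{n,n}$ and each $\sigma_\BB$ as the normal cone at $v_\BB$, which is exactly the statement of the theorem.

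First I would compute the pairing $\langle (z,w), v_{\BB} \rangle$ directly from $v_{\BB} = u_{\pi(\BB)} - s_{\pi(\BB)}(\e^{k} + \f^k)$. Using the identity $s_{\pi} = \sum_i \pi(i) = -\sum_i \pi(\ii)$, a short calculation yields the clean formula
\[
\langle (z, w), v_{\BB} \rangle \;=\; \sum_{i=1}^n (z_i - z_k)\, \pi(\BB)(i) \;+\; \sum_{i=1}^n (w_k - w_i)\, \pi(\BB)(\ii).
\]
Only the differences $z_i - z_k$ and $w_k - w_i$ appear, as required for the pairing to be well-defined on $\N_n \times \N_n$.

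Next, I would assemble the $2n$ coefficients $(z_i - z_k)$ and $(w_k - w_i)$ into a single weight function on the alphabet $\{1, \ldots, n, \1, \ldots, \n\}$, and view $\pi(\BB)$ as a bijection from this alphabet to the odd integers $\{\pm 1, \pm 3, \ldots, \pm(2n-1)\}$. The pairing above is now a rearrangement sum, and by the rearrangement inequality it is minimized over all bijections exactly when the alphabet is listed in weakly decreasing order of its weights. Unpacking Proposition~\ref{prop:fan}, the cone $\sigma_\BB$ is precisely the set of $(z,w)$ whose weights are weakly decreasing along the word of $\BB$; so for $(z,w) \in \sigma_\BB$ the bijection $\pi(\BB)$ realizes the rearrangement minimum, and $v_\BB$ is a minimizer of $\langle (z,w), \cdot \rangle$. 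Taking $(z,w)$ in the interior of $\sigma_\BB$ upgrades this to uniqueness.

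The mildly delicate point --- and the only place where the bipermutation structure has to be checked --- is the forced tie at the pair $k, \ii$, whose weights $z_k - z_k$ and $w_k - w_k$ are identically zero. The rearrangement minimum over \emph{unrestricted} bijections is achieved by either order of this pair, but only the choice $\pi(k) + 2 = \pi(\ii)$ corresponds to a bipermutation, and this is exactly the convention used in constructing $v_{\BB}$. One also checks that the other bipermutation constraint $\pi(i) < \pi(\ii)$ for $i \neq k$ reads $z_i + w_i \geq z_k + w_k$, which is automatic from the cone description of $\sigma_\BB$. With these compatibilities in place, the rearrangement step produces $v_\BB$ as the unique minimizer in the interior of $\sigma_\BB$, and the theorem follows.
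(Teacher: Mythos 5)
This theorem is only quoted in the paper from \cite{ArdilaDenhamHuh1}, so your argument has to stand on its own, and as written it has a genuine gap. Your pairing formula $\langle (z,w), v_\B\rangle = \sum_i (z_i-z_k)\,\pi(\B)(i) + \sum_i (w_k-w_i)\,\pi(\B)(\ii)$ is correct, but the $k$ appearing in the weights is $k(\B)$, the special element of the bipermutation being paired. When you compare $v_\B$ against a competitor $v_{\B'}$ whose special element $k'=k(\B')$ differs from $k$, the quantity $\langle (z,w), v_{\B'}\rangle$ is a rearrangement sum for the \emph{different} weight function $i \mapsto z_i - z_{k'}$, $\ii \mapsto w_{k'}-w_i$, not for the weights you fixed. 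So the rearrangement inequality only shows that $v_\B$ beats the vertices $v_{\B'}$ with $k(\B')=k$; it says nothing about the many vertices with $k'\neq k$. Your ``mildly delicate point'' (the tie at $\{k,\overline{k}\}$ and the constraint $\pi(i)<\pi(\ii)$) only addresses whether the weight-sorted word is a bipermutation with special element $k$, not this cross-$k$ comparison; the same omission also undermines the uniqueness claim on the interior of $\sigma_\B$.

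The gap is fixable inside your framework, but it needs a computation you never make. Writing $z_i-z_{k'}=(z_i-z_k)+(z_k-z_{k'})$ and $w_{k'}-w_i=(w_k-w_i)+(w_{k'}-w_k)$, and using $\sum_i \pi'(i)=s_{\pi'}=-\sum_i\pi'(\ii)$ for $\pi'=\pi(\B')$, one gets
\[
\langle (z,w), v_{\B'}\rangle \;=\; \sum_{\ell} a_\ell\, \pi'(\ell) \;+\; s_{\pi'}\Big[(z_k+w_k)-(z_{k'}+w_{k'})\Big],
\]
where $a$ is your weight function built from $k$. Since every letter precedes its barred copy in $\pi'$, we have $s_{\pi'}<0$ (as the paper notes inside the proof of Theorem \ref{th:biEulerian}), and on $\sigma_\B$ we have $z_{k'}+w_{k'}\geq z_k+w_k$ because $k'$ precedes $\overline{k'}$ in $\B$ --- exactly the inequality you mention, but needed here for a different purpose. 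Hence the correction term is nonnegative (strictly positive on the interior, since the chain inequality from $k'$ to $\overline{k'}$ is facet-defining there), and combined with the rearrangement bound $\sum_\ell a_\ell\pi'(\ell)\geq\sum_\ell a_\ell\pi(\B)(\ell)$ it completes both the minimization and the uniqueness statements. Without this step --- in particular without invoking $s_{\pi'}<0$ --- the claim that $v_\B$ minimizes $\langle (z,w),\cdot\rangle$ over all of $\Pi_{n,n}$ does not follow from what you wrote.
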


\subsection{The face structure of the bipermutahedron.}

\begin{definition}\label{def:Bseq}
A \emph{bisequence} on $E$ is a sequence $\B=B_1|\cdots|B_m$ of nonempty subsets of $E$, called the \emph{parts} of $\B$, such that
\begin{enumerate}[(1)]\itemsep 5pt
\item  every element of $E$ appears in at least one part of  $\B$, 
\item  every element of $E$ appears in at most two parts of  $\B$, and 
\item some element of $E$ appears in exactly one part of $\B$.
\end{enumerate}
A \emph{bisubset} of $E$ is a bisequence of length $2$.
A \emph{bipermutation} of $E$ is a bisequence  of length $2n-1$. 
The \emph{poset of bisequences} $\B_n$ consists of the bisequences on $[n]$ ordered by adjacent refinement, so $\B \leq \B'$ if $\B$ can be obtained from $\B'$ by merging adjacent parts.
\end{definition}

For example
$23|124 \leq 23|24|1 \leq 2|3|4|2|4|1|1$ in the poset $\B_4$.
 The poset of bisequences on $E$ is a graded poset.
 Its $k$-th level  consists of the bisequences of $k+1$ nonempty subsets of $E$,  and the top level consists of the bipermutations of $E$.

%
%

\begin{proposition} \label{prop:faces} \cite{ArdilaDenhamHuh1} The face poset of the bipermutahedron $\Pi_{n,n}$ is anti-isomorphic to the poset of bisequences $\B_n$; that is:
\begin{enumerate}
\item
The faces of the bipermutahedron are in bijection with the bisequences on $[n]$.

\item 
The dimension of the face labeled by $\B$ is one less than the number of parts of $\B$.

\item
Two faces $F$ and $F'$ of the bipermutahedron satisfy $F \supseteq F'$ if and only if their bisequences satisfy $\B \leq \B'$ in $\B_n$.
\end{enumerate}
\end{proposition}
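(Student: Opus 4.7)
The plan is to argue via the normal fan: since the preceding theorem identifies $\Sigma_{n,n}$ as the normal fan of $\Pi_{n,n}$, and the face poset of a polytope is anti-isomorphic to the poset of cones of its normal fan ordered by inclusion, it suffices to establish an inclusion-preserving bijection between the cones of $\Sigma_{n,n}$ and the bisequences on $[n]$ ordered by refinement. The strategy is to extend the construction of Definition \ref{def:fan} — which produces a bipermutation from a generic configuration and identifies a maximal cone — to arbitrary configurations and arbitrary cones.

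Given any representative $p = (p_1, \ldots, p_n)$ of a point in $\N_n \times \N_n$, I would produce $\B(p)$ by projecting each $p_i$ both vertically and horizontally onto the supporting line $\ell(p)$, grouping coinciding projections into a single part, and reading the resulting groups right to left. Points on $\ell(p)$ contribute a single label $i$, while points off $\ell(p)$ contribute two labels $i$. The three axioms of Definition \ref{def:Bseq} are immediate: every label appears at least once and at most twice, and the points on $\ell(p)$ — which exist by the definition of supporting line — give labels appearing exactly once.

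The central claim is that $\B(p)$ is constant on the relative interior of each cone of $\Sigma_{n,n}$, and that distinct cones yield distinct bisequences. The coordinate description of Proposition \ref{prop:fan} makes this transparent: the maximal cone $\sigma_\B$ is cut out by a chain of weak inequalities among the quantities $z_i - z_k$ and $w_k - w_i$, and its faces are obtained by converting some of these inequalities to equalities — which corresponds precisely to merging adjacent labels of $\B$ into a single part. Conversely, every bisequence is realized by an explicit configuration placing prescribed labels at prescribed points along a chosen line. This establishes (1), and counting degrees of freedom yields (2): a bisequence with $m$ parts produces a cone of dimension $m-1$ in the $(2n-2)$-dimensional ambient space, hence a face of $\Pi_{n,n}$ of codimension $m-1$.

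For (3), one has $\sigma_\B \subseteq \sigma_{\B'}$ iff the equalities defining $\sigma_\B$ strengthen those defining $\sigma_{\B'}$, iff $\B$ is obtained from $\B'$ by merging adjacent parts, iff $\B \leq \B'$ in $\B_n$. Reversing via the anti-isomorphism with the face poset yields $F_\B \supseteq F_{\B'} \iff \B \leq \B'$. The main technical obstacle is the careful verification that the map $\B(\cdot)$ encodes exactly the fan stratification — in particular that two adjacent parts of $\B(p)$ merge precisely as the corresponding inequality from Proposition \ref{prop:fan} becomes an equality — which requires some case analysis on how the supporting line $\ell(p)$ responds as $p$ varies within a cone, and on verifying well-definedness modulo the translation action identifying $\R^n$ with $\N_n$.
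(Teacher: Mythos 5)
Note first that the paper itself offers no proof of this proposition: it is quoted from \cite{ArdilaDenhamHuh1}, so there is no in-paper argument to compare yours against. Your route --- invoke the theorem that $\Sigma_{n,n}$ is the normal fan of $\Pi_{n,n}$, then identify the cones of $\Sigma_{n,n}$ with bisequences via the chain-of-inequalities description of the maximal cones $\sigma_{\B}$ in Proposition \ref{prop:fan}, where passing to a face means turning consecutive inequalities into equalities, i.e.\ merging adjacent parts --- is the natural reconstruction and is sound. The two points you flag and defer (that the label of a cone lying in several $\sigma_{\B}$ is well defined, and that distinct bisequences give distinct cones) are genuine but routine: the cone attached to a bisequence $\CC$ is exactly the locus where the quantities $z_i-z_k$, $w_k-w_i$ of Proposition \ref{prop:fan} are constant on each part of $\CC$ and weakly ordered across consecutive parts, which manifestly depends only on $\CC$, and $\CC$ can be recovered from any relative-interior point, so the configuration-space case analysis you worry about can be bypassed entirely. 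One wording issue to fix when you conclude item (2): your degree-of-freedom count correctly gives that the face of $\Pi_{n,n}$ labeled by an $m$-part bisequence has \emph{codimension} $m-1$, i.e.\ dimension $2n-1-m$; the phrase ``dimension is one less than the number of parts'' is accurate for the cones of the fan $\Sigma_{n,n}$, not for the faces of the polytope (bipermutations, with $2n-1$ parts, label the vertices), so be explicit about which object your count refers to.
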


Figure \ref{fig:biperm} shows the bipermutahedron $\Pi_{2,2}$ and the bipermutahedral fan $\Sigma_{2,2}$, with its faces labeled by the  bisequences on $\{1,2\}$.

\begin{figure}[h]
\[
	\begin{tikzpicture}[scale=1]
root/.style={circle,draw,inner sep=1.2pt,fill=black}]
	\filldraw[fill=cyan,very thick] 
(0,0) node[very thick] {$\bullet$} node [below] {$\begin{array}{|rr|}\hline  -3 & \,\,\,\,\,3 \\ -3 & 3 \\ \hline \end{array}$}
--
(1,1) node {$\bullet$} node [right] {$\begin{array}{|rr|}\hline  -3 & 3 \\ 1 & -1 \\ \hline \end{array}$}
--
(1,3) node {$\bullet$} node [right] {$\begin{array}{|rr|}\hline  -1 & 1 \\ 3 & -3 \\ \hline \end{array}$}
--
(0,4) node {$\bullet$} node [above] {$\begin{array}{|rr|}\hline  \,\,\,\,\, 3 & -3 \\ 3 & -3 \\ \hline \end{array}$}
--
(-1,3) node {$\bullet$} node [left] {$\begin{array}{|rr|}\hline  3 & -3 \\ -1 & 1 \\ \hline \end{array}$}
--
(-1,1) node {$\bullet$} node [left] {$\begin{array}{|rr|}\hline  1 & -1 \\ -3 & 3 \\ \hline \end{array}$}
--
(0,0);

 \begin{scope}[shift={(8,2)}, scale=.9, root/.style={circle,draw,inner sep=1.2pt,fill=black},every node/.style={scale=0.95}]
\draw[style=very thick,color=gray] (2,0) -- (-2,0);
\draw[style=very thick,color=gray] (-2,-2) -- (2,2);
\draw[style=very thick,color=gray] (2, -2) -- (-2, 2);
\node at (0,-1.5) {\footnotesize$2|1|2$};
\node at (0,1.5) {\footnotesize$1|2|1$};
\node at (1.5,-0.7) {\footnotesize$2|2|1$};
\node at (-1.5,-0.7) {\footnotesize$1|2|2$};
\node at (1.5,0.7) {\footnotesize$2|1|1$};
\node at (-1.5,0.7) {\footnotesize$1|1|2$};
\node at (0,0) [root, label={[label distance=0pt]below:{\grey{\footnotesize{$12$}}}}] {};
\node at (1.8,-1.45) [label={[label distance=5pt]300:{\grey{\small{$2|12$}}}}] {};
\node at (1.8,1.45) [label={[label distance=5pt]60:{\grey{\small{$12|1$}}}}] {};
\node at (-1.8,-1.45) [label={[label distance=5pt]240:{\grey{\small{$12|2$}}}}] {};
\node at (-1.8,1.45) [label={[label distance=5pt]120:{\grey{\small{$1|12$}}}}] {};
\node at (1.9,0) [label=right:{\grey{\small{$2|1$}}}] {};
\node at (-1.9,0) [label=left:{\grey{\small{$1|2$}}}] {};
\node (1) at (3,0) {};
\node (2) at (4.5,0) {};
 \end{scope}
\end{tikzpicture}
\]
\caption{\label{fig:biperm} The bipermutahedron $\Pi_{2,2}$ and its normal fan, the bipermutahedral fan $\Sigma_{2,2}$.}
\end{figure}
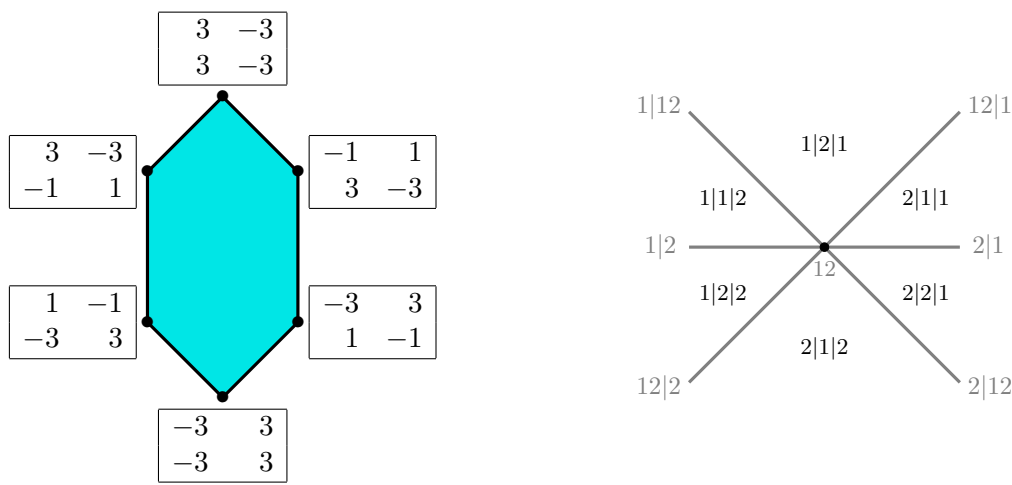

Since the bipermutahedral fan is simplicial \cite{ArdilaDenhamHuh1}, the bipermutahedron is simple.

\begin{proposition}\label{prop:inequalities}
The bipermutahedron $\Pi_{n,n}$ is given by the following minimal inequality description in $\R^n \times \R^n$:
\begin{eqnarray*}
\sum_{e \in [n]} x_e &=& 0, \\
\sum_{e \in [n]} y_e &=& 0, \\
\sum_{s \in S} x_s + \sum_{t \in T} y_t &\geq& - \, \big(|S|+|S-T|\big) \cdot \big(|T|+|T-S|\big) \quad \text{ for each bisubset $S|T$ of $[n]$}.
\end{eqnarray*}
\end{proposition}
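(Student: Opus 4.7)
My plan is to evaluate the linear functional $L_{S|T}(x,y) := \sum_{s \in S} x_s + \sum_{t \in T} y_t$ directly on each vertex $v_\BB$ of $\Pi_{n,n}$ and show that $L_{S|T}(v_\BB) \geq -\alpha\beta$, where $\alpha := |S|+|S-T|$ and $\beta := |T|+|T-S|$ (so $\alpha + \beta = 2n$), with equality iff $\BB$ refines $S|T$. Granted this, the minimum locus of $L_{S|T}$ on $\Pi_{n,n}$ is $\conv\{v_\BB : \BB \text{ refines } S|T\}$, which by Proposition \ref{prop:faces} is the facet $F_{S|T}$, so each listed inequality is facet-defining. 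The equations $\sum x_e = \sum y_e = 0$ record the inclusion $\Pi_{n,n} \subseteq \M_n \times \M_n$, and the minimality of the description follows from Proposition \ref{prop:faces}, which puts facets of $\Pi_{n,n}$ in bijection with bisubsets.

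Using $v_\BB = u_{\pi(\BB)} - s_{\pi(\BB)}(\e^k + \f^k)$, I compute
\[
L_{S|T}(v_\BB) = \sum_{s \in S}\pi(s) - \sum_{t \in T}\pi(\bar{t}) - s_\pi \cdot \big([k \in S]+[k \in T]\big),
\]
where $k = k(\BB)$ and $\pi = \pi(\BB)$. Since $k$ appears only once in $\BB$, the Iverson-bracket sum is $1$ if $k$ lies in exactly one of $S,T$ (Case A) and $2$ if $k$ lies in both (Case B). In Case A, substituting $s_\pi = \sum_{i \in S}\pi(i)+\sum_{i \in T-S}\pi(i)$ collapses the formula to $L_{S|T}(v_\BB) = -\sum_{j \in X_\BB}\pi(j)$, where $X_\BB$ is the $\beta$-element set of positions in the doubled word of $\BB$ occupied by the first occurrences of the elements of $T-S$ and the second occurrences of the elements of $T$. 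Since the $\pi$-values at positions $1,\ldots,2n$ are the odd integers $-(2n-1),\ldots,2n-1$, the rearrangement inequality gives $\sum_{j\in X_\BB}\pi(j)\leq\alpha\beta$ with equality iff $X_\BB$ is the set of last $\beta$ positions, and a short combinatorial check identifies that equality condition with $\BB$ refining $S|T$. In Case B, the same manipulation yields the cleaner formula $L_{S|T}(v_\BB) = \sum_{i \in S-T}\pi(\bar{i}) - \sum_{i \in T-S}\pi(i)$, which I minimize by placing the pairs indexed by $S-T$ in the first $2|S-T|$ positions of the doubled word and the pairs indexed by $T-S$ in the last $2|T-S|$ positions, pairing up each block consecutively; the assumption $|S\cap T|\geq 1$ in Case B makes these blocks disjoint and leaves room for $k,\bar{k}$ to be placed adjacently in the middle, so the minimum is attained by a genuine bipermutation. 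An algebraic simplification then shows the resulting minimum equals $-\alpha\beta + (|S\cap T|^2 + |S-T| + |T-S|) > -\alpha\beta$, consistent with the fact that no such $\BB$ refines $S|T$.

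I expect Case B to be the main obstacle: Case A reduces to a transparent rearrangement, while Case B requires recognizing the rewriting of $L_{S|T}(v_\BB)$ that eliminates the $s_\pi$ term, verifying that the two simultaneous rearrangements can be realized by a valid bipermutation (respecting the order of first and second occurrences and the adjacency of $k$ and $\bar{k}$), and checking the algebraic identity that produces the positive correction $|S\cap T|^2 + |S-T| + |T-S|$.
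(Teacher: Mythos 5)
Your proposal is correct, but it follows a genuinely different route from the paper's proof. The paper never verifies the inequality at an arbitrary vertex: it invokes the previously quoted theorem that the normal fan of $\Pi_{n,n}$ is $\Sigma_{n,n}$, so the facet normals are exactly the rays $\e_S+\f_T$ and each functional $\e_S+\f_T$ is automatically minimized on the facet labeled $S|T$; the proof then only computes that minimum value by evaluating at one vertex $v_{\B}$ with $\B$ refining $S|T$, where $(\e_S+\f_T)(v_{\B})=\sum_{s\in S}\pi(s)+\sum_{t\in S-T}\pi(\bar t)$ is forced to be the sum of the $r=|S|+|S-T|$ smallest values $-(2n-1),\ldots,-(2n-2r+1)$, i.e.\ $r(r-2n)$. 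Your Case A equality computation reproduces exactly this calculation; what you add is a from-scratch verification of the inequality at every vertex: the rearrangement bound in Case A, and in Case B the position constraints (each second occurrence follows its first, so the relevant positions satisfy $p_j\geq 2j$ and $q_j\leq 2n-2j+1$) yielding the slack $|S\cap T|^2+|S-T|+|T-S|>0$ --- I checked your identity and it is right, and it correctly rules out equality when $k(\B)\in S\cap T$. The trade-off: the paper's argument is shorter because the normal-fan theorem tells it where each functional is minimized, while yours uses only the combinatorial face poset of Proposition \ref{prop:faces} to match equality loci with facets, and it produces extra quantitative information (an explicit positive slack at every vertex off the facet). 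If you write it up, spell out the precedence argument that makes the ``consecutive blocks at the two ends'' placement extremal in Case B (the separate bounds on the two sums hold for every bipermutation, so achievability is only a consistency check), and note that $\Pi_{n,n}$ is full-dimensional in $\M_n\times\M_n$ (its normal fan is complete), so the two equations are its affine hull and the facet-defining inequalities you exhibit indeed cut out the polytope.
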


\begin{proof}
The first two equations hold, and determine a codimension two subspace perpendicular to the lineality space $\R\{\e_E, \f_E\}$ of $\NN(\Pi_{n,n})$. The minimal inequality description is then determined by the rays $\e_S + \f_T$ for the bisubsets $S|T$, and each inequality is essential since the facets of the bipermutahedron are in bijection with the bisubsets of $[n]$.

Consider a bisubset $S|T$. The linear functional $\e_S+\f_T$ is minimized for the facet of $\Pi_{n,n}$ labeled by the bisequence $S|T$, and hence for any vertex $v_\B$ indexed by a subsequence $\B$ refining $S|T$. Consider such a bisequence $\B$ and let $k$ be its non-repeated element. 
Since $k$ appears only once in $\B$, it only appears once in $S|T$, so $(\e_S+\f_T)(\e^k + \f^k)=1$. Thus
\begin{eqnarray*}
(\e_S+\f_T)(v_\B) &=& (\e_S+\f_T)(u_{\pi(\B)} - s_{\pi(\B)}(\e^k + \f^k)) \\
&=& \sum_{s \in S} \pi(s) + \sum_{t \in T} (-\pi(\bt)) - \sum_{t \in [n]} (-\pi(\bt)) \\
&=& \sum_{s \in S} \pi(s) + \sum_{t \in [n]-T} \pi(\bt) \\
&=& \sum_{s \in S} \pi(s) + \sum_{t \in S-T} \pi(\bt)
\end{eqnarray*}
is the sum of the values of the function $\pi$ on $S$ and $\overline{S-T}$. To compute this sum, notice that $S|T = [(S-T) \cup (S \cap T)] \, |  \,[(S \cap T) \cup (T-S)]$, so for any bipermutation $\B$ refining $S|T$, the word $\pi(\B)$ must contain the numbers $(S-T) \cup (\overline{S-T})  \cup (S \cap T)  = S \cup (\overline{S-T}) $ in the first $r$ positions and the numbers $(\overline{S \cap T}) \cup (T-S) \cup (\overline{T-S}) = (T-S) \cup \overline{T}$ in the last $2n-r$ positions, where $r=|S| + |S-T|$ and $2n-r = |T| + |T-S|$. It follows that
\begin{eqnarray*}
\sum_{s \in S} \pi(S) + \sum_{t \in S-T} \pi(\bt) &=& -(2n-1) -(2n-3) - \cdots - (2n-2r+1) \\
&=& r(-2n+r).
\end{eqnarray*}
This completes the proof.
\end{proof}

The original construction of the bipermutahedron $\Pi_{n,n}$, given in Section \ref{sec:construction}, may seem somewhat complicated on first sight. However, its inequality description is remarkably simple, and very reminiscent of that of the  standard permutahedron, which is given by 
\begin{eqnarray*}
\sum_{e \in [n]} x_e &=& 0, \\
\sum_{s \in S} x_s &\geq& -\, |S| \cdot |E-S|  \qquad \text{for each subset $\emptyset \subsetneq S \subsetneq [n]$}.
\end{eqnarray*}
This makes one suspect that this is one of nicest polytopes whose normal fan is the bipermutahedral fan $\Sigma_{n,n}$. We prove a precise statement to this effect in Proposition \ref{prop:symmetry}: the bipermutahedron has the largest possible symmetry group.

Readers familiar with deformations of permutahedra, as studied by Postnikov \cite{Postnikov}, may wonder whether bipermutahedra belong to this family of polytopes; on the surface, they look like they do.
However, the bipermutahedron is \textbf{not} a deformation of a permutahedron. 
One way to see this is to observe that the bipermutahedral fan has walls spanning hyperplanes of the form $x_i+y_i=x_j+y_j$, which are not in the braid arrangement. 

We know that deformations of permutahedra are in bijection with submodular functions \cite{Edmonds, Fujishige}. In Section \ref{sec:amplecone} we give an analogous description of the cone of deformations of bipermutahedra.

\section{The $f$-vector}\label{sec:f-vector}

In this section we compute the $f$-vector of the bipermutahedron. The formulas are slightly simpler for the reverse sequence, the $f$-vector of the bipermutahedral fan.
Recall that the \emph{$f$-vector} of a $d$-dimensional fan $\Delta$ is  $f_\Delta=(f_0, \ldots, f_d)$ where $f_i$ is the number of $i$-dimensional faces of $\Delta$.

\begin{proposition}\label{prop:fvector}
The $f$-vector of the bipermutahedral fan $\Sigma_{n,n}$ is given by
\begin{eqnarray*}
f_{d-2}(\Sigma_{n,n}) &=& \text{\# of multigraphs on vertex set $[d]$ and edge set $[n]$ and no isolated vertices} \\
&=& \sum_{i=0}^d(-1)^{d-i}{d \choose i}{i \choose 2}^n
\end{eqnarray*}
for $2 \leq d \leq 2n$.
\end{proposition}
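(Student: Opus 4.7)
The plan is to set up a bijection between the $(d-2)$-dimensional cones of the bipermutahedral fan $\Sigma_{n,n}$ and the multigraphs on vertex set $[d]$ with labeled edge set $[n]$ having no isolated vertices, and then to count the latter by inclusion-exclusion.

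By Proposition \ref{prop:faces}, the $(d-2)$-dimensional cones of $\Sigma_{n,n}$ correspond, via the face poset anti-isomorphism, to bisequences on $[n]$ having exactly $d-1$ parts. Given such a bisequence $\B = B_1|\cdots|B_{d-1}$, I \emph{augment} it by appending a $d$-th part $B_d := \{i \in [n] : i \text{ appears in only one part of } \B\}$; by condition (3) of Definition \ref{def:Bseq}, the set $B_d$ is nonempty, and in the augmented sequence $B_1|\cdots|B_d$ every element of $[n]$ appears in exactly two distinct parts. Associate to $\B$ the multigraph $G(\B)$ on vertex set $[d]$ whose edge $i$ has as its two endpoints the indices of the parts containing $i$; since these two parts are distinct, each edge has two distinct endpoints, and since each part is nonempty, no vertex of $G(\B)$ is isolated. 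The inverse sends a multigraph $G$ to the bisequence with $B_v := \{i \in [n] : v \text{ is incident to edge } i\}$ for $v = 1, \ldots, d-1$, and verifying this is a well-defined bisequence amounts to checking that its three defining conditions correspond, respectively, to each edge having at least one endpoint among $\{1, \ldots, d-1\}$, each edge having at most two endpoints, and the distinguished vertex $d$ being non-isolated.

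To count multigraphs on $[d]$ with labeled edge set $[n]$ and no isolated vertex, where each edge is a $2$-subset of $[d]$, I apply inclusion-exclusion over the set $S \subseteq [d]$ of forced isolated vertices: when every vertex of $S$ is isolated, each edge must be a $2$-subset of $[d] \setminus S$, giving $\binom{d-|S|}{2}^n$ choices. Summing with alternating signs yields
\[
\sum_{s=0}^{d} (-1)^s \binom{d}{s} \binom{d-s}{2}^n \; = \; \sum_{i=0}^{d} (-1)^{d-i} \binom{d}{i} \binom{i}{2}^n
\]
after reindexing $i = d-s$, which is the desired formula.

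The main obstacle is constructing and verifying the bijection, particularly understanding the role of the augmenting part $B_d$ and the corresponding distinguished vertex $d$ of the multigraph: it is this vertex whose non-isolation encodes exactly bisequence condition (3), which is the subtle asymmetric part of the bisequence definition. Once the bijection is in place, the inclusion-exclusion count is entirely routine.
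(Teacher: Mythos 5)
Your proposal is correct and follows essentially the same route as the paper: label the $(d-2)$-cones by bisequences with $d-1$ parts via Proposition \ref{prop:faces}, append a final part of the once-occurring elements to build a loopless multigraph on $[d]$ with edge set $[n]$ and no isolated vertices, note the construction is reversible, and finish with the same inclusion-exclusion over forced isolated vertices. Your explicit check that bisequence condition (3) corresponds to the non-isolation of the distinguished vertex $d$ is a point the paper leaves implicit, but the argument is the same.
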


\begin{proof}
Each  $(d-2)$-dimensional faces of $\Sigma_{n,n}$ is indexed by a bisequence $\B$ of $[n]$ with $d-1$ parts. We can use it to construct an ordered set partition $o(\B)$ of $\{1,1,2,2,\ldots,n,n\}$ with $d$ parts by adding a final part to $\B$ consisting of the elements of $[n]$ that only appear once in $\B$. The ordered set partitions that arise are those where no part contains repeated elements. 

In turn, we can use $o(\B)$ to construct a multigraph $G(\B)$ on vertex set $[d]$ and edge set $[n]$ by letting edge $e$ connect the vertices $i$ and $j$ such that $e$ is in the $i$th and $j$th parts of $\B$. The multigraphs that arise are those with no isolated vertices.
An example for a 4-dimensional face of $\Sigma_{5,5}$ is shown below. 
\[
\B = 1|1\mathbf{4}|35|35|\mathbf{2}
\quad \longmapsto \quad 
o(\B) = 1|14|35|35|2|24 
\quad \longmapsto \quad 
G(\B) = 
\begin{tikzpicture}[scale=1.25,baseline=(current bounding box.center),
plain/.style={circle,draw,inner sep=1.2pt,fill=white},
root/.style={circle,draw,inner sep=1.2pt,fill=black}]
\node (1) at (60:1) [root, label=above right:${\bf 1}$] {};
\node (2) at (0:1) [root, label=right:${\bf 2}$] {};
\node (3) at (300:1) [root, label=below right:${\bf 3}$] {};
\node (4) at (240:1) [root, label=below left:${\bf 4}$] {};
\node (5) at (180:1) [root, label=left:${\bf 5}$] {};
\node (6) at (120:1) [root, label=above left:${\bf 6}$] {};
\draw (1) -- (2);
\draw (30:1.1) node {1};
\draw (5) -- (6);
\draw (150:1.1) node {2};
\draw (3) to[out=150,in=30] (4);
\draw (270:0.5) node {3};
\draw (3) to[out=-150,in=-30]  (4);
\draw (90:0.4) node {4};
\draw (2) --  (6);
\draw (270:1.2) node {5};
\end{tikzpicture}
\]

It is straightforward to recover the bisequence $\B$ from its associated multigraph $G(\B)$. This proves the first claim.

We can then use the inclusion-exclusion formula to compute
\begin{eqnarray*}
f_{d-2}(\Sigma_{n,n}) &=& \text{(multigraphs on vertex set $[d]$ and edge set $[n]$ and no isolated vertices)} \\
&=& \sum_{S \subseteq [d]} (-1)^{|S|} \text{(multigraphs on $V=[d]$, $E=[n]$ where each vertex in $S$ is isolated)} \\
&=& \sum_{S \subseteq [d]} (-1)^{|S|} \text{(multigraphs on $V=[d]-S$, \, $E=[n]$)} \\
&=& \sum_{S \subseteq [d]} (-1)^{|S|} {d-|S| \choose 2}^n,
\end{eqnarray*}
giving the desired result. \end{proof}

We can give a more concise expression in terms of Sokal's \emph{deformed exponential function} of \cite{Sokal}, which is the following evaluation of the three variable Rogers-Ramanujan function:
\[
F(\alpha, \beta) = \sum_{n \geq 0} \frac{\alpha^n \, \beta^{n \choose 2}}{n!}.
\]
This function has been widely studied in complex analysis \cite{Langley, Liu, Morris} and statistical mechanics \cite{SS, SS2, Sokal}. It also arises naturally in the computation of the Tutte polynomial and arithmetic Tutte polynomials of the classical root systems \cite{Ardila, ArdilaCastilloHenley}.

\begin{theorem}\label{th:fseries}
The double exponential generating function for the face numbers of the bipermutahedral fans is
\[
\sum_{n \geq 0} \sum_{d \geq 2} f_{d-2}(\Sigma_{n,n}) \frac{x^d}{d!} \frac{y^n}{n!} = \frac{F(x,e^y)}{e^x}
\]
\end{theorem}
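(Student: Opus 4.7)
The plan is to deduce this identity directly from the closed formula
\[
f_{d-2}(\Sigma_{n,n}) = \sum_{i=0}^d (-1)^{d-i}\binom{d}{i}\binom{i}{2}^n
\]
established in Proposition \ref{prop:fvector}, by substituting it into the double exponential generating function on the left-hand side and rearranging until the definition of $F$ appears.

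First, I would substitute the formula and swap the order of summation so that the $i$-index (which will play the role of the summation variable in $F$) becomes outermost, writing
\[
\sum_{n \geq 0}\sum_{d \geq 2} f_{d-2}(\Sigma_{n,n})\frac{x^d}{d!}\frac{y^n}{n!}
= \sum_{i \geq 0}\sum_{n \geq 0}\binom{i}{2}^n\frac{y^n}{n!}\sum_{d \geq i}(-1)^{d-i}\binom{d}{i}\frac{x^d}{d!}.
\]
(Here one should check the boundary cases $d=0,1$ and $n=0$: for $n \geq 1$ the formula of Proposition \ref{prop:fvector} gives $0$ at $d=0,1$, so extending the $d$-sum to $d \geq 0$ is harmless; the degenerate $n=0$ slice corresponds to the constant term $1$ on the right, matching $F(x,1)/e^x = e^x/e^x = 1$.)

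Next, I would evaluate the two inner series in closed form. The $d$-sum telescopes via the substitution $k=d-i$:
\[
\sum_{d \geq i}(-1)^{d-i}\binom{d}{i}\frac{x^d}{d!}
= \frac{x^i}{i!}\sum_{k \geq 0}\frac{(-x)^k}{k!}
= \frac{x^i}{i!}\,e^{-x},
\]
and the $n$-sum is immediate:
\[
\sum_{n \geq 0}\binom{i}{2}^n\frac{y^n}{n!} = e^{y\binom{i}{2}} = (e^y)^{\binom{i}{2}}.
\]

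Finally, substituting back and pulling out $e^{-x}$ gives
\[
\sum_{n \geq 0}\sum_{d \geq 2} f_{d-2}(\Sigma_{n,n})\frac{x^d}{d!}\frac{y^n}{n!}
= e^{-x}\sum_{i \geq 0}\frac{x^i\,(e^y)^{\binom{i}{2}}}{i!}
= \frac{F(x,e^y)}{e^x},
\]
which is the identity. There is no genuine obstacle here: the theorem is a clean consequence of Proposition \ref{prop:fvector} combined with two standard exponential-generating-function manipulations. The only thing that requires care is handling the low-degree terms consistently so that the stated range $d \geq 2$, $n \geq 0$ on the left really matches the expansion of $F(x,e^y)/e^x$; verifying that the missing $d=0,1$ contributions vanish for $n \geq 1$ and reconcile to $1$ at $n=0$ is the single bookkeeping step to execute carefully.
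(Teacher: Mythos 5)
Your proof is correct, but it takes a genuinely different route from the paper's. The paper never expands the alternating sum: it works with the combinatorial interpretation from Proposition \ref{prop:fvector} (multigraphs on vertex set $[d]$ and edge set $[n]$ with no isolated vertices) and applies the Exponential Formula, writing $G=e^{C}$ and $I=e^{C-x}$ for the double exponential generating functions of all multigraphs, connected multigraphs, and multigraphs with no isolated vertices, so that $I(x,y)=G(x,y)/e^{x}$, and then computes $G(x,y)=\sum_{d,n}\binom{d}{2}^{n}\frac{x^d}{d!}\frac{y^n}{n!}=F(x,e^{y})$ directly. You instead start from the explicit inclusion-exclusion formula in the same proposition and rearrange the double series, using $\sum_{d\ge i}(-1)^{d-i}\binom{d}{i}\frac{x^d}{d!}=\frac{x^i}{i!}e^{-x}$ and $\sum_{n\ge 0}\binom{i}{2}^n\frac{y^n}{n!}=e^{y\binom{i}{2}}$; this is an elementary binomial-inversion computation in which the factor $e^{-x}$ appears algebraically, whereas the paper's argument explains that factor structurally (stripping off isolated vertices). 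Both proofs rest on Proposition \ref{prop:fvector}, yours on its second (closed-formula) assertion and the paper's on its first (bijective) one, and your explicit handling of the boundary cases $d=0,1$ and the $n=0$ slice is careful bookkeeping that the paper's statement quietly glosses over.
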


\begin{proof}
Let
\begin{eqnarray*}
g_{d,n} &=& \text{\# of multigraphs on } V = [d], E = [n], \\
c_{d,n} &=& \text{\# of multigraphs on } V = [d], E = [n] \text{ that are connected}, \\
i_{d,n} &=& \text{\# of multigraphs on } V = [d], E = [n] \text{ that have no isolated vertices},
\end{eqnarray*}
and
\[
G(x,y) = \sum_{d,n \geq 0} g_{d,n} \frac{x^d}{d!} \frac{y^n}{n!}, \qquad
C(x,y) = \sum_{d,n \geq 0} c_{d,n} \frac{x^d}{d!} \frac{y^n}{n!}, \qquad
I(x,y) = \sum_{d,n \geq 0} i_{d,n} \frac{x^d}{d!} \frac{y^n}{n!} 
\]
be their double exponential generating functions. 
The Exponential Formula for exponential generating functions \cite[Corollary 5.1.6]{EC2} gives
\[
G(x,y) = e^{C(x,y)},  \qquad
I(x,y) = e^{C(x,y)-x},  \qquad
\]
since $x$ is the generating function for the graph with one isolated vertex. It follows that
\[
I(x,y) = \frac{G(x,y)}{e^x}.
\]
It remains to compute
\begin{eqnarray*}
G(x,y) &=&  \sum_{n,d \geq 0} {d \choose 2}^n \frac{x^d}{d!} \frac{y^n}{n!}  \\
&=& \sum_{d \geq 0} e^{{d \choose 2}y} \frac{x^d}{d!} \\
&=& F(x,e^y),
\end{eqnarray*}
implying the desired result.
\end{proof}

Using Proposition \ref{prop:fvector} or \ref{th:fseries} one easily computes the $f$-vector of the first few bipermutahedra:
\begin{eqnarray*}
f(\Pi_{1,1}) &=& (1,1), \\
f(\Pi_{2,2}) &=& (1,6,6,1), \\
f(\Pi_{3,3}) &=& (1, 90, 180, 114, 24, 1), \\
f(\Pi_{4,4}) &=& (1, 2520, 7560, 8460, 4320, 978,78,1).
\end{eqnarray*}

\section{The $h$-vector and the biEulerian polynomial}\label{sec:descents}

Recall that the \emph{$h$-vector} of a $d$-dimensional simplicial fan $\Delta$ is  $h_\Delta=(h_0, \ldots, h_d)$ where
\[
h_0(x+1)^d + \cdots + h_d(x+1)^0 = 
f_0 x^d + \cdots + f_d x^0,
\]
where $f_\Delta=(f_0, \ldots, f_d)$ is the $f$-vector of the fan $\Delta$. This is a more economical encoding of the $f$-vector, because the Dehn-Somerville relations guarantee that $h_i = h_{d-i}$ for all  $i$. The $h$-vector is also geometrically significant: if $\Delta$ is an integral fan, then the Hilbert polynomial of the corresponding toric variety $X_{\Delta}$ is the $h$-polynomial $h_dx^d + \cdots + h_0x^0$.
We now give a combinatorial interpretation of the $h$-vector of the bipermutahedral fan.

\begin{definition} \label{def:descent}
Let $\B$ be a bipermutation on $[n]$ whose non-repeated element is $k(\B) = k$. Two consecutive elements $i|j$ of $\B$ form a \emph{descent} if one of the following conditions holds:
%

\bgroup
\def\arraystretch{1.5}
\begin{tabular}{lcl}
a) $i, j \in E$ and $i>j$, & & c) $i \in E-k$, $j \in \overline{E-k}$ and $i > k$, \\
b) $i, j \in \overline{E}$ and $i < j$, & & d) $i \in \overline{E-k}$, $j \in E-k$ and $j < k$,
\end{tabular}
\egroup

\noindent where we interpret the non-repeated element $k$ of $\B$ as being $k$ (resp. $\overline{k}$) when it is compared with an element of $E$ (resp. of $\overline{E}$).
Otherwise, $i|j$ form an \emph{ascent} of $\B$.
Let $\des(\B)$ and $\asc(\B)$ denote the number of descents and ascents of $\B$, respectively.
\end{definition}

For example the bipermutation $\B=5|4|2|3|1|4|1|2|5=5|4|\2|{\bf 3}|1|\4|\1|2|\5$, where $k=3$, has five descents:  
$5|4$ (of type a), 
$4|\2$ (of type c), 
$\2|\bf{3}$ (of type b),
${\bf 3}|1$ (of type a), and
$\1|2$ (of type d).

\begin{definition} \label{def:biEulerian}
The $n$th \emph{biEulerian polynomial} $\displaystyle B_n(x) = \sum_{i=0}^{2n-2} b(n,i)x^i$ is given by
\[
b(n,i) = \text{ number of bipermutations of $[n]$ with $i$ descents, for } 0 \leq i \leq 2n-2.
\]
\end{definition}

The first few biEulerian polynomials are the following.
\begin{eqnarray*}
B_1(x) &=& 1, \\
B_2(x) &=& 1+4x+x^2, \\
B_3(x) &=& 1+20x+48x^2+20x^3+x^4, \\
B_4(x) &=& 1+72x+603x^2+1168x^3+603x^4+72x^5+x^6.
\end{eqnarray*}

%

\noindent
The biEulerian polynomial is analogous to the \emph{Eulerian polynomial} $A_n(x)$, which enumerates the permutations of $[n]$ according to their number of descents. The following result is analogous to the fact that the $h$-polynomial of the permutahedral fan is the Eulerian polynomial.

\begin{theorem} \label{th:biEulerian}
The $h$-polynomial of the bipermutahedral fan $\Sigma_{n,n}$ is the $n$th biEulerian polynomial.
\end{theorem}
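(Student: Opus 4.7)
The plan is to invoke the classical line-shelling formula for the $h$-polynomial of a simple polytope. Since $\Sigma_{n,n}$ is simplicial, $\Pi_{n,n}$ is simple of dimension $2n-2$, and its $h$-polynomial coincides with the one defined for the fan. The formula says that for any linear functional $\lambda$ on $\M_n \times \M_n$ whose values at the two endpoints of every edge of $\Pi_{n,n}$ differ,
\[
h(\Sigma_{n,n})(t) \;=\; \sum_{\B} t^{d_\lambda(v_\B)},
\]
where $d_\lambda(v_\B)$ counts the edges at $v_\B$ along which $\lambda$ strictly decreases. It therefore suffices to exhibit such a $\lambda$ and show that $d_\lambda(v_\B) = \des(\B)$ for every bipermutation $\B$.

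I will take $\lambda := -\sum_{i\in E} i\, x_i - \varepsilon \sum_{i\in E} i\, y_i$ for a sufficiently small $\varepsilon > 0$ (say $\varepsilon < 1/(n-1)$). Each edge at $v_\B$ is indexed by an adjacent pair $b_i|b_{i+1}$ of $\B$: merging the pair produces a bisequence of length $2n-2$, and its other refinement is the bipermutation $\B'$ at the opposite end of the edge. The verification will split into nine sub-cases according to the rules (a)--(d) of Definition \ref{def:descent} and the position of the special letter $\mathbf{k}$: the pair can be (A) $a|c$, (B) $\bar a|\bar c$, (C) $a|\bar c$ with $a\neq c$, (D) $\bar a|c$, the degenerate (C') $a|\bar a$, or the four variants $\mathbf{k}|c$, $\mathbf{k}|\bar c$, $c|\mathbf{k}$, $\bar c|\mathbf{k}$. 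For each case one computes $v_\B - v_{\B'}$ directly from the construction in Section \ref{sec:construction}; in the seven easier cases this is a short vector such as $2(\e^c - \e^a)$, $4(\e^k - \e^c)$, or $-2\e^a - 2\f^c + 2(\e^k + \f^k)$, and the sign of $\lambda(v_\B - v_{\B'})$ visibly matches the relevant descent condition, with the $x$-coefficients dominating the $y$-coefficients because $\varepsilon$ is small.

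The main obstacle will be case (C'), the degenerate pair $a|\bar a$. Here merging produces a singleton and the companion $\B'$ has $k(\B') = a$, so the identity of the non-repeated element changes. A careful word-position bookkeeping will show, however, that the underlying bijections $\pi$ and $\pi'$ coincide letter-by-letter, which forces
\[
v_\B - v_{\B'} \;=\; s_\pi\bigl[(\e^a - \e^k) + (\f^a - \f^k)\bigr]
\]
where $s_\pi = \sum_{i\in E} \pi(i)$. The sign of $\lambda(v_\B - v_{\B'})$ therefore equals the sign of $s_\pi(k-a)(1+\varepsilon)$, so aligning it with the type (c) descent condition $a > k$ requires control on the sign of $s_\pi$. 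The key observation that makes everything work is that $s_\pi < 0$ for every bipermutation: because $\pi(i) < \pi(\bar i)$ for each $i \in E$ and $\sum_i \bigl(\pi(i) + \pi(\bar i)\bigr) = 0$, one has $2 s_\pi < 0$. This fixes the sign and matches the descent rule. Inspecting the nine formulas also shows $\lambda(v_\B - v_{\B'}) \neq 0$ in every case, so $\lambda$ is generic; assembling the sub-cases then gives $d_\lambda(v_\B) = \des(\B)$ for every $\B$, and summing yields $h(\Sigma_{n,n})(t) = \sum_\B t^{\des(\B)} = B_n(t)$.
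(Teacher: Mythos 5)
Your proposal is correct and follows essentially the same route as the paper's proof: the sweep/indegree formula for the $h$-vector of the simple polytope $\Pi_{n,n}$ with a generic functional whose $x$-weights dominate the $y$-weights, explicit computation of the edge vectors $v_\B - v_{\B'}$ for each adjacent pair, and the degenerate pair $i|\bar{i}$ (where the special element changes) handled via the sign of $s_\pi$. Your justification that $s_\pi<0$, using $\pi(i)<\pi(\bar{i})$ for all $i$ together with $\sum_i\bigl(\pi(i)+\pi(\bar{i})\bigr)=0$, is marginally cleaner than the paper's explicit bound, but otherwise the arguments coincide.
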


\begin{proof}
If $P$ is a simple polytope, then the $h$-vector of its normal fan can be computed in terms of a sweep hyperplane on $P$, as follows \cite[Section 9.9]{Grunbaum}.
For a generic linear functional $\lambda$, if we orient the edges of $P$ in the direction $u \rightarrow v$ for $\lambda(u) > \lambda(v)$, then 
\[
h_i (P) = \text{ number of vertices of $P$ with indegree } i.
\]
Let us choose a generic linear functional $\lambda = (z, w) \in \N_n \times \N_n$ with 
\[
z_n  >> \cdots >> z_2  >> z_1 >> w_n > \cdots > w_2 > w_1 > 0.
\]
We claim that for the vertex $v=v_B$ corresponding to bipermutation $\B$, we have
\begin{equation}\label{eq:indeg}
\text{indegree}(v) = \text{number of descents of } \B.
\end{equation}
The desired result will follow.

Let $\B= b_1| \cdots | b_{2n-1}$ with non-repeated element $k$ and let $v$ be the corresponding vertex.
By Proposition \ref{prop:faces}, the $2n-2$ edges $vv_h$ containing the vertex $v$ of $\B$ correspond to the $2n-2$ pairs of adjacent elements, say $b_h|b_{h+1} = i|j$, for $1 \leq h \leq 2n-2$. Identifying a vertex $v_h$ and its bipermutation $\B_h$, we have the following.
%

\medskip
\noindent
A) If $j \neq i$, the $h$th neighbor of $\B(v) = b_1| \cdots | i | j | \cdots | b_{2n-1}$ is $\B(v_h) = b_1| \cdots | j | i | \cdots | b_{2n-1}$. 

\medskip
\noindent
B) If $j = i$, the $h$th neighbor of $\B(v) = b_1| \cdots | i | i | \cdots | {\bf k} | \cdots | b_{2n-1}$ is $\B(v_h) = b_1| \cdots | {\bf i} | \cdots | k | k | \cdots | b_{2n-1}$. 

\medskip

\noindent
We will now prove that the edge $vv_h$ has direction $v \leftarrow v_h$ if and only if $i|j$ is a descent of $\B$.
First consider case A). It has eight subcases.

\medskip

\noindent
\emph{Case a)}: $i, j \in E$. The permutations $\pi:=\pi(\B)$ and $\pi_h:=\pi(\B_h)$ only differ in the positions $\pi(i)=a-1,  \pi(j) = a+1$ and $\pi'(i)=a+1, \, \pi'(j) = a-1$ for some $a$. Thus $s(\pi') = s(\pi) =:s$, and also $k(\B) = k(\B_h) = k$. Then we have
\begin{eqnarray*}
v-v_h &=& (u - s(\e^k+\f^k)) - (u_h - s(\e^k+\f^k)) \\
&=&
\begin{blockarray}{ccccc}
 & i & & j &  \\
\begin{block}{|ccccc|}
\hline
\cdot & a-1 & \cdot  &  a+1 & \cdot \\
\cdot & \cdot & \cdot & \cdot & \cdot \\
\hline
\end{block}
\end{blockarray}
\,
-
\begin{blockarray}{ccccc}
 & i & & j &  \\
\begin{block}{|ccccc|}
\hline
\cdot & a+1 & \cdot  &  a-1 & \cdot \\
\cdot & \cdot & \cdot & \cdot & \cdot \\
\hline
\end{block}
\end{blockarray}
\\
&=& 
2(\e^j-\e^i),
\end{eqnarray*}
%
so
\[
(z, w)(v-v_h) = 2(z_j-z_i)
\]
and the edge $vv_h$ is directed $v \leftarrow v_h$ if and only if this quantity is negative. This happens precisely when $i>j$, which agrees with case a) of Definition \ref{def:descent} of a descent.

\medskip

\noindent \emph{Case b):} $i, j \in \overline{E}$. Similarly to Case a), $v-v_h= 2(\f^i-\f^j)$ 
so 
$
(z, w)(v-v_h) = 2(w_i - w_j)
$
and the edge $vv_h$ is directed $v \leftarrow v_h$ if and only if $i<j$. This agrees with case b) of Definition \ref{def:descent}.

\medskip

\noindent \emph{Case c):} $i \in E-k$ and $j \in \overline{E-k}$. Now $\pi$ and $\pi'$ , $\pi(i)=a-1,  \pi(j) = a+1$ and $\pi_h(i)=a+1, \, \pi_h(j) = a-1$ for some $a$. Now we have that $s(\pi_h) = s(\pi)+2$, so 
\begin{eqnarray*}
v-v_h &=& (u - s(\e^k+\f^k)) - (u_h - (s+2)(\e^k+\f^k)) \\
&=&
\begin{blockarray}{ccccc}
 & i & & j &  \\
\begin{block}{|ccccc|}
\hline
\cdot & a-1 & \cdot  &  \cdot & \cdot \\
\cdot & \cdot & \cdot & -(a+1) & \cdot \\
\hline
\end{block}
\end{blockarray}
\,
-
\begin{blockarray}{ccccc}
 & i & & j &  \\
\begin{block}{|ccccc|}
\hline
\cdot & a+1 & \cdot  &  \cdot & \cdot \\
\cdot & \cdot & \cdot & -(a-1) & \cdot \\
\hline
\end{block}
\end{blockarray} + 2(\e^k+\f^k)
\\
&=& 
2(-\e^i - \f^j) + 2(\e^k+\f^k),
\end{eqnarray*}
so
\[
(z, w)(v-v_h) = 2(z_k-z_i) + 2(w_k - w_j) 
\]
and the edge $vv_h$ is directed $v \leftarrow v_h$ when $i>k$, in agreement with case c) of Definition \ref{def:descent}.

\medskip

\noindent \emph{Case d):} $i \in \overline{E-k}$ and $j \in E-k$. 
Similarly to Case c),  
$(z, w)(v-v_h) = 2(z_j-z_k) + 2(w_i - w_k)$ 
so the edge $vv_h$ is directed $v \leftarrow v_h$ if and only if $j<k$, in agreement with case d) of Definition \ref{def:descent}.

\medskip

\medskip
 
Now we consider case B). In this case $v$ and $v_h$ are assigned the same permutation $\pi = \pi_h$, so $v-v_h = -s(\e^k+\f^k-\e^i-\f^i)$
and
\[
(z, w)(v-v_h) = s\big((z_i - z_k) + (w_i - w_k)\big).
\]
Notice that, since $j$ precedes $\overline{j}$ in $\pi$ for all $j$, we have
\[
s \leq -(2n-1) - (2n-5) - (2n-9) - \cdots + (2n-7) + (2n-3) < 0.
\]
It follows that the edge $vv_h$ is directed $v \leftarrow v_h$ if and only if $i>k$. This is consistent with case c) of Definition \ref{def:descent}.

\medskip

In summary, the edges pointing towards $v$ are in bijection with the descents of the corresponding bipermutation $\B$. The desired result follows.
\end{proof}

The $h$-vector of any simple polytope is symmetric thanks to the Dehn-Sommerville relations \cite{Ziegler}. For the bipermutahedron there is a simple combinatorial explanation, in light of Theorem \ref{th:biEulerian}. If $\rev(\B)$ is the reverse of the bipermutation $\B$, then the descents of $\B$ become ascents in $\rev(\B)$ and viceversa. Therefore $\des(\rev(\B)) = 2n-2-\des(\B)$. This implies that $h_i(\Pi_{n,n}) = h_{2n-2-i}(\Pi_{n,n})$.

\begin{corollary}
The Hilbert polynomial of the bipermutahedral variety $X_{n,n}$ is the biEulerian polynomial.
\end{corollary}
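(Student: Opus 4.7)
The plan is to deduce this statement as a direct combination of Theorem~\ref{th:biEulerian} with the general fact, recalled at the beginning of Section~\ref{sec:descents}, that if $\Delta$ is an integral simplicial fan then the Hilbert polynomial of the corresponding toric variety $X_\Delta$ equals the $h$-polynomial $h_d x^d + \cdots + h_0 x^0$. Since Theorem~\ref{th:biEulerian} identifies the $h$-polynomial of $\Sigma_{n,n}$ with $B_n(x)$, the only thing to verify is that $\Sigma_{n,n}$ is an integral fan, i.e., that each of its rays is generated by a lattice vector in $\N_n \times \N_n$.

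To check integrality, I would appeal to the description provided by Proposition~\ref{prop:inequalities} (together with Proposition~\ref{prop:faces}): the rays of the bipermutahedral fan are in bijection with the bisubsets $S|T$ of $[n]$, with ray generator $\e_S + \f_T$. These are manifestly integer vectors in the lattice $\Z^n \times \Z^n$, so after passing to the quotient $\N_n \times \N_n = (\R^n/\R\e_E) \times (\R^n/\R\f_E)$ they still lie in the image of the integer lattice. Hence $\Sigma_{n,n}$ is an integral simplicial fan.

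Applying the general Hilbert polynomial/$h$-vector formula to the integral fan $\Sigma_{n,n}$ and combining with Theorem~\ref{th:biEulerian} immediately yields that the Hilbert polynomial of $X_{n,n} = X_{\Sigma_{n,n}}$ is the $n$th biEulerian polynomial $B_n(x)$. There is no serious obstacle here; the corollary is essentially a translation of the combinatorial statement of Theorem~\ref{th:biEulerian} into the language of toric geometry, and the only nontrivial point worth spelling out is the integrality of the ray generators $\e_S + \f_T$.
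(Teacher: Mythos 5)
Your argument is correct and is exactly the one the paper intends: the corollary is stated as an immediate consequence of Theorem~\ref{th:biEulerian} together with the fact recalled at the start of Section~\ref{sec:descents} that for an integral fan the Hilbert polynomial of the toric variety is the $h$-polynomial, with integrality clear from the ray generators $\e_S+\f_T$. Your explicit verification of integrality is a harmless (and reasonable) elaboration of what the paper leaves implicit.
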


%

\section{The bipermutahedral triangulation of the product of triangles}

In this section we construct a unimodular triangulation of the product of $n$ triangles that is intimately tied to the bipermutahedron. This will be used in the next section to find a formula for the biEulerian polynomial, using the Ehrhart theory of this triangulation.

Let $\e,\f,\g$ be the standard basis of $\R^3$, and consider the standard equilateral triangle and its $n$-fold product in $\R^{3 \times n}$:
\[
\Delta := \conv(\e,\f,\g), \qquad \Delta^n := \underbrace{\Delta \times \cdots \Delta}_{n \text{ times}}.
\]
We identify $\R^{3n}$ with $\R^n \times \R^n \times \R^n$ with standard bases $\{\e_1, \ldots \e_n\}, \{\f_1, \ldots, \f_n\}, \{\g_1, \ldots, \g_n\}$, respectively. We write vectors in $\R^{3 \times n}$ as $3 \times n$ tables.

The polytope $\Delta^n$ has $3^n$ vertices, namely, the $0-1$ tables of shape $3 \times n$ having exactly one $1$ in each column. These vertices are in bijection with the $3^n$ pairs $(S,T)$ of subsets of $[n]$ whose union is $[n]$ as follows:
\[
v_{S,T}: = 
\begin{bmatrix}
&&& \e_{E-S} &&&  \\
&&& \f_{E-T} &&&  \\
&&& \g_{S \cap T} &&& 
\end{bmatrix}
\qquad
 \text{ for } \emptyset \subseteq S, T \subseteq [n], \,\, S \cup T = [n].
\]
For example
\[
v_{235,1345} =
\begin{bmatrix}
1 & 0 & 0 & 1 & 0 \\
0 & 1 & 0 & 0 & 0 \\
0 & 0 & 1 & 0 & 1
\end{bmatrix}.
\]

\begin{theorem}\label{th:triangulation}
There is a unimodular triangulation of $\Delta^n$ that is combinatorially isomorphic to a triple cone over the bipermutahedral fan $\Sigma_{n,n}$.
\end{theorem}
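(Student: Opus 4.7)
The plan is to define the triangulation explicitly, verify that each simplex is unimodular via a direct determinant computation, reduce the tiling property to a volume count once disjoint interiors are established, and then read off the combinatorial isomorphism. For each bipermutation $\BB = b_1|\cdots|b_{2n-1}$ of $[n]$ and each $j \in \{1,\ldots,2n-2\}$, set $S_j := b_1 \cup \cdots \cup b_j$ and $T_j := b_{j+1} \cup \cdots \cup b_{2n-1}$, so that $S_j|T_j$ is the bisubset obtained by coarsening $\BB$ along the $j$th cut; then define
\[
\sigma_\BB := \conv\bigl(v_{\emptyset,[n]},\, v_{[n],\emptyset},\, v_{[n],[n]},\, v_{S_1,T_1},\ldots, v_{S_{2n-2},T_{2n-2}}\bigr),
\]
a candidate $2n$-simplex with $2n+1$ vertices: three ``apex'' vertices (at which every column of the $3 \times n$ table uses the same row), and $2n-2$ ``cut'' vertices in bijection with the rays of the maximal cone $\sigma_\BB$ of $\Sigma_{n,n}$.

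\textbf{Unimodularity.} Using $\{\f_i - \e_i,\, \g_i - \e_i\}_{i \in [n]}$ as a basis for the affine direction space of $\Delta^n$, the edge $v_{S_j,T_j} - v_{\emptyset,[n]}$ has $\f_i$-coordinate $\mathbf{1}[i \in S_j \setminus T_j]$ and $\g_i$-coordinate $\mathbf{1}[i \in S_j \cap T_j]$, while the apex-$\f$ and apex-$\g$ edges fill in the all-ones rows on the $\f$ and $\g$ halves respectively. Assemble these $2n$ edge vectors as the rows of a $2n \times 2n$ matrix, with rows ordered by $j = 1, 2, \ldots, 2n-1$ (the $j=2n-1$ row being apex-$\f$) followed by the apex-$\g$ row. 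Each column has a well-defined ``first $1$'' row: for $i \neq k(\BB)$, the column $(\f,i)$ fires at row $q_i$ and $(\g,i)$ at row $p_i$, where $p_i < q_i$ are the positions of the two copies of $i$ in $\BB$; the column $(\f,k)$ fires at $p_k$; and $(\g,k)$ fires at the apex-$\g$ row. These $2n$ positions form a bijection onto the rows because $\BB$ is a bipermutation, so reordering the columns accordingly yields a lower-triangular matrix with $1$'s on the diagonal, giving $|\det| = 1$.

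\textbf{Tiling $\Delta^n$.} The normalized volume of $\Delta^n$ equals $\binom{2n}{2,2,\ldots,2} = (2n)!/2^n$, exactly the number of bipermutations of $[n]$. Since each $\sigma_\BB$ has normalized volume $1$ by unimodularity, the sum of volumes matches; so once the $\sigma_\BB$ are shown to have pairwise disjoint interiors, volume accounting forces them to tile $\Delta^n$ and produce a unimodular triangulation. Disjoint interiors is the main technical obstacle. I plan to realize $\{\sigma_\BB\}$ as a regular subdivision by constructing a lifting $h : \{v_{S,T}\} \to \R$ built from the ray vectors $\e_S + \f_T$ of the bipermutahedral fan, augmented by a generic convex perturbation, so that the lower envelope of the lift recovers exactly the $\sigma_\BB$. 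Equivalently, one could exhibit a piecewise-linear map $\Delta^n \to \R^{2n-2}$ whose chambers of linearity are the $\sigma_\BB$, reconstructing $\BB$ from the sorted order of certain linear functionals in the coordinates of a point.

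\textbf{Combinatorial identification.} Because every maximal simplex contains all three apex vertices, the triangulation is the triple join of the apex $2$-simplex with the subcomplex $\Delta'$ spanned by the non-apex vertices. By Proposition \ref{prop:faces}, the rays of $\Sigma_{n,n}$ correspond bijectively to bisubsets of $[n]$, and a collection of rays spans a cone of $\Sigma_{n,n}$ if and only if the corresponding bisubsets are the cuts of a common bipermutation. The vertices and maximal simplices of $\Delta'$ realize exactly this data, so $\Delta' \cong \Sigma_{n,n}$ combinatorially and the full triangulation is combinatorially isomorphic to the triple cone over $\Sigma_{n,n}$.
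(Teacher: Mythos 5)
Your simplices are exactly the ones the paper uses (the three apices $v_{\emptyset,E}, v_{E,\emptyset}, v_{E,E}$ together with the cut vertices $v_{S_j,T_j}$ of a bipermutation $\BB$), and your unimodularity argument is correct and in fact more self-contained than the paper's: the ``first one'' positions of the columns $(\f,i)$ and $(\g,i)$ are indeed $q_i$, $p_i$, $p_k$, and the apex-$\g$ row, which are pairwise distinct and exhaust the rows, so the matrix is lower-triangular up to a column permutation. (The paper instead deduces unimodularity from the unimodularity of the cones $\sigma_\BB$, proved in the reference, transported through the unimodular map $\pi_1$.) The volume count $\vol(\Delta^n) = (2n)!/2^n = \#\{\text{bipermutations}\}$ is also a genuinely nice observation that would reduce ``cover'' to ``pairwise disjoint interiors.''

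However, there is a real gap: the disjoint-interiors/subdivision step, which you yourself flag as ``the main technical obstacle,'' is only announced as a plan and never carried out. You neither construct the lifting function whose lower envelope is supposed to be $\{\sigma_\BB\}$ nor verify that claim, and the alternative you mention -- a piecewise-linear map on $\Delta^n$ whose domains of linearity are the $\sigma_\BB$, recovering $\BB$ from sorted linear functionals -- is essentially what the actual proof has to build: the paper projects $\Delta^n$ by $\pi$ to $\N_n\times\N_n$, places $\pi(p)$ in a chamber $\sigma_\BB$ of the complete simplicial fan $\Sigma_{n,n}$, and then must still show that the three apex coefficients $a,b,c$ in the resulting barycentric expression are nonnegative; that nonnegativity is not automatic and uses both that $p$ lies in the nonnegative orthant and that the non-repeated element $k(\BB)$ lies in exactly one of $S,T$ for every bisubset $S|T$ of $\BB$. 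Without some version of this argument (or an actual regular-subdivision certificate) your proof does not establish that the $\sigma_\BB$ have disjoint interiors. A secondary point: even granting disjoint interiors, volume accounting gives a covering but not by itself a face-to-face triangulation, and face-to-faceness is what your final ``combinatorial identification'' paragraph silently relies on; the paper obtains it by showing $T_{\BB_1}\cap T_{\BB_2}$ is the convex hull of the vertices indexed by common bisubsets, again via the fan, while in your approach it would follow only from the (unproven) regularity of the subdivision.
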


\begin{proof}
Consider the composite map

\[
\begin{tabular}{ccccccc}
$\pi$  & : & $\R^{3 \times n}$ & $\xrightarrow[]{\,\,\pi_1\,\,}$ & $\R^{2 \times n}$ & $\xrightarrow[]{\,\,\pi_2\,\,}$ &  $\N_n \times \N_n$ \\
      &   &  
$
\begin{bmatrix}
&&& \uu &&&  \\
&&& \vv &&&  \\
&&& \ww &&& 
\end{bmatrix}
$
& $\longmapsto$  & 
$\begin{bmatrix}
&& \one-\uu &&  \\
&& \one-\vv &&  
\end{bmatrix}$
& $\longmapsto$  & 
$
\begin{bmatrix}
&& \overline{\one-\uu} &&  \\
&& \overline{\one-\vv} &&  
\end{bmatrix}
$
\end{tabular}
\]
Notice that $\pi$ maps the $3^n-3$ vertices corresponding to the bisubsets $S|T$ of $[n]$ to the $3^n-3$ rays of the bipermutahedral fan:
\[
\pi \,:\, v_{S,T} \,\xrightarrow[]{\,\,\pi_1\,\,}\, \e_S + \f_T \,\xrightarrow[]{\,\,\pi_2\,\,}\, \e_{S|T},
\]
whereas $\pi(v_{\emptyset, E}) = \pi(v_{E,\emptyset}) = \pi(v_{E, E}) = 0$.
Also,  the polytope $\Delta^n$ lives in the $2n$-dimensional affine plane 
\[
P = \{(\uu,\vv,\ww) \in \R^{3 \times n} \, : \, u_1+v_1+w_1 = \cdots = u_n+v_n+w_n=1\} \subset \R^{3 \times n}
\]
and the restriction $\pi_1 : P \rightarrow \R^{2 \times n}$ is a unimodular bijective transformation, which induces a bijection between $\{v_{S|T} \,: \, S|T \text{ is a bisubset of } [n]\}$ and $\{\e_S+\f_T : \, S|T \text{ is a bisubset of } [n]\}$.

\medskip

For each bipermutation $\B$ of $[n]$,
consider the $(2n)$-simplex $T_{\B} \subset \Delta^n$ given by
\[
T_{\B} := \conv\{\vv_{\emptyset,E}, \vv_{E, \emptyset}, \vv_{E,E}, \vv_{S,T} \, : \, S|T \text{ is a bisubset of } \B\},
\]
where the $2n-2$ bisubsets of a bipermutation $\B=b_1|\cdots|b_{2n-1}$ are $b_1\ldots b_k | b_{k+1} \ldots b_{2n-1}$ for $1 \leq k \leq 2n-2$. For example, $T_{1|3|2|1|3} = \conv\{\vv_{\emptyset, 123}, \vv_{123, \emptyset}, \vv_{123, 123}, \vv_{1,123}, \vv_{13, 123}, \vv_{123, 13}, \vv_{123, 3}\}.$
Notice that $T_{\B}$ is indeed a simplex because an affine dependence between these points would lead, under the projection $\pi$ to a linear dependence between the rays $\e_{S|T}$ of the simplicial chamber $\sigma_{\B}$.

\medskip

1. We claim that 
\[
\T := \{T_{\B} \, : \, \B \text{ is a bipermutation of } [n]\}
\]
is a triangulation of the product of triangles $\Delta^n$. To prove it, we need to show that these simplices cover $\Delta^n$, and that they intersect face-to-face; that is, any two of them intersect in a common face.

\medskip

1a. $\T$ covers $\Delta^n$.

\medskip

Given a point $p \in \Delta^n$, consider its image $\pi(p)$ in $\N_n \times \N_n$. The image of the simplex $\Delta^n$ is $\pi(\Delta^n) = \conv\{\e_{S|T} \, : \, S|T \text{ is a bisubset of } [n]\}$. Therefore, choosing $\B$ to be a bipermutation such that $\pi(p)$ is in the chamber $\sigma_{\B} =  \cone\{\e_{S|T} \, : \, S|T \text{ is a bisubset of } \B\}$ of $\N_n \times \N_n$, we must have
\[
\pi(p) \in \conv\{0, \e_{S|T} \, : \, S|T \text{ is a bisubset of } \B\}.
\]
This means that there are scalars $\lambda_{S|T} \geq 0$ with $\sum_{S|T} \lambda_{S|T} \leq 1$ such that
\[
\pi(p)  =  \sum_{S|T \subseteq \B} \lambda_{S|T} \e_{S|T} \qquad  \text{ in } \N_n \times \N_n,
\]
using the symbol $\subseteq$ for bisubsets. Therefore there are unique scalars $\lambda$ and $\mu$ such that 
\begin{equation}\label{eq:lambdamu}
\pi_1(p)  =  \sum_{S|T \subseteq \B} \lambda_{S|T} (\e_S + \f_T) + \lambda \e_E + \mu \f_E \qquad  \text{ in } \R^{2 \times n}.
\end{equation}
so 
\[
\pi_1(p)  =  \sum_{S|T \subseteq \B} \lambda_{S|T} (\e_S + \f_T) + a(\e_{\emptyset} + \f_E) + b(\e_E + \f_{\emptyset}) + c(\e_E + \f_E) \qquad  \text{ in } \R^{2 \times n}.
\]
for any scalars $a,b,c$ with $b+c=\lambda$ and $a+c=\mu$. Recall $\pi_1$ is a bijection from $P$ to $\R^{2 \times n}$, so
\[
p  =  \sum_{S|T \subseteq \B} \lambda_{S|T} v_{S,T} + a\, v_{\emptyset,E} + b\, v_{E, \emptyset} + c\, v_{E,E} \qquad  \text{ in } \R^{3 \times n}.
\]
This expresses $p$ as a linear combination of the vertices of the simplex $T_{\B}$. To conclude that $p \in T_{\B}$, we need the coefficients of the right hand side to be non-negative and add up to $1$.
The second condition is satisfied uniquely by the choices
\[
a= 1 - \sum_{S|T \subseteq \B} \lambda_{S|T} - \lambda, \qquad
b= 1 - \sum_{S|T \subseteq \B} \lambda_{S|T} - \mu, \qquad
c= \lambda+\mu + \sum_{S|T \subseteq \B} \lambda_{S|T} - 1;
\]
we need to show that they are non-negative. To do that, we analyze \eqref{eq:lambdamu} more closely.

Let $\B=b_1|\ldots|b_{2n-1}$. Then $b_1 \in S$ for any bisubset $S|T$ of $\B$, so the coefficient of $\e_{b_1}$ in \eqref{eq:lambdamu} is
\begin{eqnarray*}
\sum_{S|T \subseteq \B} \lambda_{S|T} + \lambda
&=& [\e_{b_1}]\pi_1(p) \\
&=& 1 - [\e_{b_1}]p \\
&\leq& 1, 
\end{eqnarray*}
where the inequality follows from the fact that $p \in \Delta^n$, which lies in the positive orthant. Similarly, the coefficient of $\f_{b_{2n-1}}$ is
\begin{eqnarray*}
\sum_{S|T \subseteq \B} \lambda_{S|T} + \mu 
&=& [\f_{b_{2n-1}}]\pi_1(p) \\
&=& 1 - [\f_{b_{2n-1}}]p \\
&\leq& 1,
\end{eqnarray*}
Finally, if $k=k(\B)$ is the element that appears only once in $\B$, then for any bisubset $S|T$ of $\B$, the element $k$ appears in $S$ or in $T$ but not in both. Therefore the sum of the coefficients of $\e_k$ and $\f_k$ in \eqref{eq:lambdamu} is
\begin{eqnarray*}
\sum_{S|T \subseteq \B} \lambda_{S|T} + \lambda + \mu
&=& [\e_k]\pi_1(p) + [\f_k]\pi_1(p) \\
&=& 2 - [\e_k]p - [\f_k]p \\
&=& 1 + [\g_k]p \\
&\geq& 1.
\end{eqnarray*}
We conclude that $a,b,c \geq 0$, and $p$ is indeed covered by the simplex $T_{\B}$, as desired.

\medskip

1b. The simplices in $\T$ intersect face-to-face.

\medskip

Consider two simplices $T_{\B_1}, T_{\B_2}$ of $\T$, and let $p$ be any point in their intersection $T_{\B_1} \cap T_{\B_2}$. 
Then its projection in $\N_n \times \N_n$ satisfies 
$\pi(p) \in \sigma_{\B_1} \cap \sigma_{\B_2} = \cone\{\e_{S|T} \, : \, S|T \text{ is a bisubset of } \B_1 \text{ and } \B_2\}$ since $\Sigma_{n,n}$ is a triangulation. Then $p \in \conv(v_{S|T} \, : \, S|T \text{ is a bisubset of } \B_1 \text{ and } \B_2\}$ analogously to the argument above.
It follows that
\[
T_{\B_1} \cap T_{\B_2} = \conv(v_{S|T} \, : \, S|T \text{ is a bisubset of } \B_1 \text{ and } \B_2\},
\]
which is a common face of $T_{\B_1}$ and $T_{\B_2}$.

\bigskip

2. Now we prove that $\T$ is unimodular. Consider a simplex $T_\B$ corresponding to a bipermutation $\B$. We know \cite{ArdilaDenhamHuh1} that the cone $\sigma_{\B}$ is unimodular, so $\{\e_{S|T} \, : \, S|T \subseteq \B\}$ is a basis for the lattice $(\Z^n/\Z) \oplus (\Z^n/\Z)$. It follows that $\{\e_S + \f_T : \, S|T \subseteq \B\} \cup \{\e_\emptyset + \f_E, \e_E + \f_\emptyset\}$ is a basis for $\Z^n \oplus \Z^n$. Since $\pi$ is a unimodular bijective transformation between $\Z^{3 \times n} \cap P$ and $\Z^n \oplus \Z^n$, it follows that the edges coming out of the vertex $v_{E,E}$ of $T_{\B}$ are a basis for $\Z^{3 \times n} \cap P$, as desired.

\medskip

3. It remains to observe that the map $T_{\B} \mapsto \sigma_{\B}$ is a bijection between the facets of the triangulation $\T$ and the facets of the fan $\Sigma_{n,n}$, and their patterns of intersection are identical thanks to the observation 1b. above. Combinatorially, the only difference between them is that triangulation $\T$ has the three cone points $v_{\emptyset,E}, v_{E,\emptyset}, v_{E,E}$.
\end{proof}

\section{A formula for the biEulerian polynomial and real-rootedness.}

Recall that the \emph{Ehrhart polynomial} $\textsf{ehr}_P(k)$ of a $d$-dimensional lattice polytope $P$ in $\R^n$ is defined by
\[
\textsf{ehr}_P(k) = |kP \cap \Z^n|
\]
and the \emph{Ehrhart $h^*$-polynomial} $h^*_P(x)$ is defined by
\[
\sum_{k \geq 0} \textsf{ehr}_P(k) x^k = \frac{h^*_P(x)}{(1-x)^{d+1}}.
\]
If $P$ has a unimodular triangulation, then its $h^*$-polynomial can be computed combinatorially as follows.

\begin{theorem} \label{th:h=h*} \cite[Theorem 10.3] {BeckRobins}
If $\T$ is a unimodular triangulation of a lattice polytope $P$, then
\[
h^*_P(x) = h_{\T}(x).
\]
\end{theorem}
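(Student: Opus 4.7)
The plan is to count lattice points of $kP$ by grouping them according to which open face of $\T$ contains them, and then to collect the contributions into a generating function. Since $\T$ is a triangulation, we have the disjoint decomposition
\[
kP = \bigsqcup_{\sigma \in \T,\, \sigma \neq \emptyset} \mathsf{int}(k\sigma),
\]
so for every $k \geq 1$,
\[
\textsf{ehr}_P(k) = \sum_{j=0}^{d} f_j(\T) \cdot N_j(k), \qquad N_j(k) := \big|\mathsf{int}(k\sigma_j) \cap \Z^{n}\big|,
\]
where $\sigma_j$ is any unimodular $j$-simplex of $\T$; the count depends only on $j$ thanks to unimodularity.

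The key lemma is the open-simplex lattice count
\[
N_j(k) = \binom{k-1}{j}.
\]
I would prove this by using that the $j$ edge vectors from any fixed vertex of $\sigma_j$ form a $\Z$-basis for $\mathrm{aff}(\sigma_j) \cap \Z^{n}$. Under this change of coordinates, interior lattice points of $k\sigma_j$ correspond bijectively to tuples $(x_1, \ldots, x_j) \in \Z_{>0}^j$ with $x_1 + \cdots + x_j < k$, of which there are $\binom{k-1}{j}$. This is the technical heart of the argument, and it is the one place where unimodularity is genuinely used.

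Granted this lemma, I would combine it with the standard identity $\sum_{k \geq 1} \binom{k-1}{j} x^k = x^{j+1}/(1-x)^{j+1}$ and the initial value $\textsf{ehr}_P(0) = 1$ to obtain
\[
\sum_{k \geq 0} \textsf{ehr}_P(k)\, x^k \;=\; 1 + \sum_{j=0}^{d} f_j(\T) \, \frac{x^{j+1}}{(1-x)^{j+1}}.
\]
Multiplying by $(1-x)^{d+1}$, absorbing the leading $1$ as the contribution of the empty face $f_{-1}(\T)=1$, and reindexing $i = j+1$ yields
\[
h^{*}_P(x) \;=\; \sum_{i=0}^{d+1} f_{i-1}(\T)\, x^{i}\, (1-x)^{d+1-i} \;=\; h_\T(x),
\]
which is the standard formula for the $h$-polynomial of the simplicial complex $\T$. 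The main obstacle is the open-simplex count $N_j(k)$; once that is secured, the rest is symbolic manipulation of generating functions.
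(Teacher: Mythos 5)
The paper does not prove this statement; it is quoted from Beck--Robins (Theorem 10.3), so there is no internal proof to compare against. Your argument is correct and is essentially the standard proof found there: stratify $kP$ by the relatively open faces of $\T$, use unimodularity to get the count $\binom{k-1}{j}$ for the relative interior of the $k$-th dilate of a unimodular $j$-simplex, and sum the resulting series against the $f$-vector. Two points worth making explicit in a final write-up: the decomposition is by \emph{relative} interiors of the nonempty faces of $\T$ (your $\mathsf{int}$ should be read that way), and applying the open-simplex count to lower-dimensional faces uses that a face of a unimodular simplex is again unimodular; this holds because a subset of a lattice basis spans a saturated sublattice, which is exactly the fact you invoke when you say the edge vectors at a vertex of $\sigma_j$ form a $\Z$-basis of the lattice in its affine hull.
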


Applying this to the bipermutahedral triangulation of the product of $n$ triangles constructed in Theorem \ref{th:triangulation}, we get the following result.

\begin{theorem} \label{th:biEulerianseries}
The biEulerian polynomial $B_n(x)$ is given by
\[
\frac{B_n(x)}{(1-x)^{2n+1}} = \sum_{k \geq 0} {k+2 \choose 2}^n x^k
\]
\end{theorem}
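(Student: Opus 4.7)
The plan is to combine the Ehrhart computation for $\Delta^n$, the unimodular triangulation $\T$ from Theorem \ref{th:triangulation}, Theorem \ref{th:h=h*}, and the identification $h_{\Sigma_{n,n}}(x) = B_n(x)$ from Theorem \ref{th:biEulerian}.

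First I would compute the Ehrhart polynomial. The lattice points of $k\Delta$ are the nonnegative integer triples $(a,b,c)$ with $a+b+c=k$, so $|k\Delta \cap \Z^3| = {k+2 \choose 2}$. The product structure of $\Delta^n$ then gives
\[
\textsf{ehr}_{\Delta^n}(k) = {k+2 \choose 2}^n,
\]
and since $\dim \Delta^n = 2n$, the definition of the $h^*$-polynomial yields
\[
\sum_{k \geq 0} {k+2 \choose 2}^n x^k \; = \; \frac{h^*_{\Delta^n}(x)}{(1-x)^{2n+1}}.
\]
The problem thus reduces to showing $h^*_{\Delta^n}(x) = B_n(x)$.

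Theorem \ref{th:h=h*}, applied to the unimodular triangulation $\T$ from Theorem \ref{th:triangulation}, gives $h^*_{\Delta^n}(x) = h_\T(x)$. Since $\T$ is combinatorially a triple cone over the bipermutahedral fan $\Sigma_{n,n}$, and since coning over a simplicial complex replaces its $f$-vector by $f_i \mapsto f_i + f_{i-1}$, a direct substitution in the defining identity of the $h$-polynomial shows that coning preserves the $h$-polynomial; iterating three times gives $h_\T(x) = h_{\Sigma_{n,n}}(x)$. Theorem \ref{th:biEulerian} then identifies this with $B_n(x)$, completing the proof. The only subtle point is the invariance of the $h$-polynomial under coning, but this is elementary, so no real obstacle arises.
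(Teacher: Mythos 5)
Your proposal is correct and follows essentially the same route as the paper: compute $\textsf{ehr}_{\Delta^n}(k) = \binom{k+2}{2}^n$, apply Theorem \ref{th:h=h*} to the unimodular triangulation of Theorem \ref{th:triangulation}, note that coning preserves the $h$-polynomial (the paper phrases this via $f_{c\Delta}(x) = (x+1)f_\Delta(x)$, equivalent to your $f_i \mapsto f_i + f_{i-1}$), and conclude with Theorem \ref{th:biEulerian}. No gaps.
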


\begin{proof}
Consider the bipermutahedral triangulation of the product $\Delta^n$ of $n$ triangles of Theorem \ref{th:triangulation}, which is unimodular. The Ehrhart polynomial of $\Delta^n$ is
\[
\textsf{ehr}_{\Delta^n}(k) = 
(\textsf{ehr}_{\Delta}(k))^n
= {k+2 \choose 2}^n,
\]
so the series in the right hand side of the statement is the Ehrhart series of $\Delta^n$, and the numerator of the left hand side is its $h^*$-polynomial, which equals the $h$-polynomial of the unimodular triangulation $\T$ by Theorem \ref{th:h=h*}.

Now, the triangulation $\T$ is combinatorially isomorphic to a triple cone over the bipermutahedral fan. Furthermore, since the $f$-polynomials of a simplicial complex $\Delta$ and its cone $c\Delta$ are related by $f_{c\Delta}(x) = (x+1)f_{\Delta}(x)$, their $h$-polynomials are identical. Therefore $h_{\T}(x) = h_{\Sigma_{n,n}}(x) = B_n(x)$, as desired.
\end{proof}

\begin{theorem} \label{th:realrooted}
The biEulerian polynomial $B_n(x)$ is 
real-rooted. Consequently the $h$-vector of the bipermutahedral fan is log-concave and unimodal.
\end{theorem}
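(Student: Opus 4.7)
The plan is to recognize $B_n(x)$ as a classical multiset Eulerian polynomial, for which real-rootedness is already known. By Theorem~\ref{th:biEulerianseries},
\[
\sum_{k \geq 0} \binom{k+2}{2}^n x^k = \frac{B_n(x)}{(1-x)^{2n+1}},
\]
which is exactly MacMahon's classical generating function identity for the Eulerian polynomial of the multiset $\{1^2, 2^2, \ldots, n^2\}$: the polynomial whose coefficient of $x^i$ enumerates the permutations of this multiset having exactly $i$ strict descents. One can verify this by matching coefficients for small $n$ (e.g.\ $1+4x+x^2$ for $n=2$), and more conceptually by observing that $B_n(x)$ is the $h^*$-polynomial of $\Delta^n$ and that the $h^*$-polynomial of a product of standard simplices is a classical multiset Eulerian polynomial. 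Thus $B_n(x)$ coincides with this multiset Eulerian polynomial, though not via a descent-preserving bijection with the bipermutation descent statistic introduced in Definition~\ref{def:descent}.

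The real-rootedness of Eulerian polynomials of multisets is a classical result going back to Simion, with alternative proofs by Brenti and, from a broader perspective, by Br\"and\'en--Savage--Visontai via the theory of $s$-Eulerian polynomials. Invoking any such theorem immediately yields the real-rootedness of $B_n(x)$.

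For the remaining assertion, note that the coefficients of $B_n(x)$ are non-negative integers, since they count bipermutations by number of descents. Newton's inequalities then imply that the coefficient sequence of a real-rooted polynomial with non-negative coefficients is log-concave, and log-concavity of a positive sequence in turn forces unimodality. Together with the first assertion, this gives the stated properties of the $h$-vector of the bipermutahedral fan. The main conceptual step is the identification of $B_n(x)$ as a multiset Eulerian polynomial; once that is in hand, the argument reduces to a standard appeal to known results on real-rooted polynomials.
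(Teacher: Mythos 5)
Your proposal is correct, but it reaches real-rootedness by a different route than the paper. Both arguments start from Theorem \ref{th:biEulerianseries}. The paper applies Wagner's operator $\mathscr{W}$ (defined by $\mathscr{W}f(z)/(1-z)^{\deg f+1}=\sum_{k\geq 0}f(k)z^k$) together with his Hadamard-product theorem: since $\mathscr{W}\binom{x+2}{2}=1$ trivially has only real non-positive roots, the multiplicativity statement applied $n$ times shows that $\mathscr{W}\bigl(\binom{x+2}{2}^n\bigr)=B_n(x)$ does too. You instead recognize the right-hand side of Theorem \ref{th:biEulerianseries} as MacMahon's generating function for the descent polynomial of the multiset $\{1,1,2,2,\ldots,n,n\}$ (equivalently, the $h^*$-polynomial of $\Delta^n$ computed via its standard staircase triangulation), so that $B_n(x)$ is a multiset Eulerian polynomial, and then you invoke Simion's classical theorem (or Br\"and\'en--Savage--Visontai) for real-rootedness. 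Your identification is correct -- the check against $B_2(x)=1+4x+x^2$ works, and the coefficient match follows from MacMahon's formula $\sum_{k\geq 0}\binom{k+2}{2}^n x^k = A_M(x)/(1-x)^{2n+1}$ for $M=\{1^2,\ldots,n^2\}$ -- and it buys an extra combinatorial fact not stated in the paper, namely that descents of bipermutations are equidistributed with descents of permutations of the doubled multiset (though, as you note, without an explicit descent-preserving bijection). The paper's route is slightly more self-contained in that it needs only the generating-function formula and Wagner's theorem, with no recognition step. Your final paragraph (Newton's inequalities for log-concavity, positivity of the coefficients forcing unimodality) matches the paper's appeal to Br\"and\'en's lemma and is fine.
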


\begin{proof}
Wagner \cite{Wagner} defined a (non-linear) operator $\mathscr{W}: \R[x] \rightarrow \R[x]$ by the rule $f \longmapsto \mathscr{W}f$, where
\[
 \frac{\mathscr{W}f(z)}{(1-z)^{\deg f + 1}} = \sum_{k \geq 0} f(k)z^k 
\]
He also showed \cite[Theorem 0.2]{Wagner} that if $f$ and $g$ are polynomials such that all the roots of $\mathscr{W}f$ and $\mathscr{W}g$ are real and non-positive, then all the roots of $\mathscr{W}(fg)$ are real and non-positive as well. Since
\[
\mathscr{W}{x+2 \choose 2} = h_1(x) = 1
\]
satisfies this condition vacuously, then
\[
\mathscr{W}\left({x+2 \choose 2}^n\right) = h_n(x)
\]
satisfies it as well. Finally we remark that the coefficients of a real-rooted polynomial are log-concave; and if they are positive, then they are unimodal. \cite[Lemma 1.1]{Branden}
\end{proof}

\section{Symmetries}

The simple inequality description of the bipermutahedron given in  Proposition \ref{prop:inequalities} makes us suspect that $\Pi_{n,n}$ is one of the most elegant polytopes whose normal fan is the bipermutahedral fan $\Sigma_{n,n}$. In this section we make this suspicion precise, proving in Proposition \ref{prop:symmetry} that $\Pi_{n,n}$ is maximally symmetric. Let $S_n$ be the symmetric group on $[n]$.

\begin{proposition}\label{prop:automorphisms}
The automorphism group of the permutahedral fan $\Sigma_{n,n}$ is $S_n \times \Z_2 \times \R_{>0}$.
\end{proposition}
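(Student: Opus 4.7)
First, I would verify the forward direction by describing the three subgroups explicitly. The symmetric group $S_n$ acts on $\N_n \times \N_n$ by simultaneously permuting the two copies of the standard basis, and it sends the cone $\sigma_{\B}$ of Proposition~\ref{prop:fan} to $\sigma_{\sigma(\B)}$, where $\sigma(\B)$ is $\B$ with all letters relabeled. The group $\Z_2$ is generated by the swap $\tau\colon (z,w)\mapsto(w,z)$; applying $\tau$ negates each $z_i-z_k$ and $w_k-w_j$, so it reverses every inequality defining $\sigma_{\B}$, and a short check shows this amounts to sending $\sigma_{\B}$ to $\sigma_{\rev(\B)}$. The group $\R_{>0}$ acts by positive scaling and preserves every fan tautologically. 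These three actions commute and pairwise intersect trivially, so together they yield a subgroup of $\Aut(\Sigma_{n,n})$ isomorphic to $S_n \times \Z_2 \times \R_{>0}$.

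For the converse, let $\phi \in \Aut(\Sigma_{n,n})$. Since $\phi$ permutes the rays of the fan, whose directions are the vectors $\e_S + \f_T$ indexed by the bisubsets $S|T$ of $[n]$, the plan is to show the induced permutation has the form prescribed by the group above. The main combinatorial step is to show that $\phi$ preserves the set of \emph{coordinate rays}, those lying in the subspaces $\N_n \times 0$ or $0 \times \N_n$; equivalently, those whose bisubset $S|T$ has $E$ as one of its parts. I expect this to be the main obstacle of the proof: it should follow from comparing a combinatorial invariant of each ray, for instance the combinatorial type of the facet $F_{S|T}$ of $\Pi_{n,n}$ whose vertices are the bipermutations refining $S|T$ (or equivalently the link of the ray in the fan), and showing that the invariant attached to coordinate rays is distinguishable from that of non-coordinate rays. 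Since the vertex counts alone do not distinguish all types (for $n=4$, types $(3,3)$ and $(3,4)$ both yield $144$), one likely needs a finer invariant such as the full $f$-vector of the facet.

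Once $\phi$ preserves the coordinate rays, it either preserves or swaps the two subspaces $\N_n \times 0$ and $0 \times \N_n$, so after composing with $\tau$ if necessary I may assume it preserves each. Then $\phi$ permutes the singleton rays $\e_{\{1\}},\ldots,\e_{\{n\}}$ up to positive rescaling, say $\e_{\{i\}} \mapsto \mu_i \e_{\{\sigma(i)\}}$ for some $\sigma \in S_n$ and $\mu_i > 0$. Applying $\phi$ to the unique (up to scalar) linear relation $\sum_i \e_{\{i\}} = 0$ in $\N_n$ forces all the $\mu_i$ to equal a common $\mu > 0$. By the same argument $\phi|_{0 \times \N_n}$ acts as $\mu'\sigma'$ for some $\mu' > 0$ and $\sigma' \in S_n$. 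Finally, applying $\phi$ to a mixed ray $\e_S + \f_T$ (with $S, T$ both proper nonempty) produces $\mu \e_{\sigma(S)} + \mu' \f_{\sigma'(T)}$; for this vector to be a positive multiple of some ray $\e_{S'} + \f_{T'}$ one forces $\mu = \mu'$, and imposing this condition over all valid bisubsets (for example $S=E-\{i\},\, T=\{i\}$) forces $\sigma = \sigma'$. Therefore $\phi = \mu \cdot \sigma$ lies in $S_n \times \R_{>0}$, which combined with the optional $\tau$ gives $\phi \in S_n \times \Z_2 \times \R_{>0}$.
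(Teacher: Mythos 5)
Your forward direction is fine, and the final bookkeeping (the relation $\sum_i \e_{\{i\}}=0$ forcing a common dilation factor, and the mixed rays forcing $\mu=\mu'$ and $\sigma=\sigma'$) would work \emph{if} the earlier reductions were in place. But the core of the argument is missing. The step you yourself flag as ``the main obstacle'' --- that any automorphism preserves the set of coordinate rays $\e_S$ and $\f_T$ --- is only conjectured: you propose comparing combinatorial invariants of the corresponding facets, observe that vertex counts do not suffice, and defer to ``a finer invariant such as the full $f$-vector'' without computing anything or showing that such an invariant actually separates coordinate from non-coordinate rays. Since this is precisely the hard content of the proposition, the proposal as written does not constitute a proof. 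The paper proceeds differently: it looks at the hyperplanes spanned by the walls of $\Sigma_{n,n}$, which come in four explicit types ($z_i+w_i=z_j+w_j$, $z_i=z_j$, $w_i=w_j$, $z_i-z_k=w_k-w_j$), counts the number of codimension-one cones lying on each type of hyperplane, and uses the fact that these counts differ for $n\ge 3$ to constrain how an automorphism can permute the types; it then exploits the braid arrangement in the variables $x_i=z_i+w_i$ (the two-chamber trick) and a dot-product computation on the normals $\d^i-\d^j$ to pin the map down to the claimed group.

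Two further steps in your reduction are also unjustified and genuinely need an argument. First, preserving the set of coordinate rays does not immediately give that $\phi$ preserves or swaps the subspaces $\N_n\times 0$ and $0\times\N_n$: a priori the images of the rays $\e_S$ could mix rays from both factors. Second, even granting that $\phi$ preserves each factor, you assert it ``permutes the singleton rays,'' but the restriction of $\phi$ to a factor only permutes the full set of rays $\{\e_S : \emptyset\ne S\subsetneq E\}$, and this set is invariant under negation ($\e_{E-S}=-\e_S$ in $\N_n$), so a singleton ray could go to a coantisingleton ray. Ruling out this negation-type behavior is exactly the delicate point: note that negation even preserves all rays $\e_S+\f_{E-S}$ with disjoint parts, so it is not obviously excluded by your mixed-ray check. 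The paper handles it explicitly by exhibiting a bisubset $S|T$ with $S,T\ne[n]$ and $S\cap T\ne\emptyset$ (possible for $n\ge 3$), for which $-(\e_S+\f_T)=\e_{E-S}+\f_{E-T}$ is not a ray of $\Sigma_{n,n}$; your proposal never addresses this case. Until the coordinate-ray invariant is actually produced and these two reductions are proved, the argument has a genuine gap.
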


\begin{proof}
We will prove that the automorphisms of $\Sigma_{n,n}$ are:

$\bullet$ the simultaneous action $\sigma(z_1, \ldots, z_n, w_1, \ldots, w_n) = (z_{\sigma(1)}, \ldots z_{\sigma(n)}, w_{\sigma(1)}, \ldots, w_{\sigma(n)})$ of a permutation $\sigma \in S_n$ on the two factors of $\N_n \times \N_n$.

$\bullet$ the \emph{swap} $f(z,w) = (w,z)$, 

$\bullet$ the positive dilations $f(z,w) = (rz, rw)$ for $r>0$,

\noindent 
and their compositions. This will prove the desired result. 
The case $n=2$ can be checked by inspection of Figure \ref{fig:biperm}, so we will assume hereon that $n \geq 3$.

\medskip

Consider any automorphism $g \in GL(\N_n \times \N_n)$ of the bipermutahedral fan. Rescaling $g$ by a positive constant does not affect the invariance of $\Sigma_{n,n}$, so we may assume that $g$ is unimodular; that is, $\det g = \pm 1$. 
Consider the hyperplanes spanned by the walls of the permutahedral fan; the automorphism $g$ must send each one of these spanned hyperplanes to another spanned hyperplane. 
The codimension 1 faces of $\Sigma_{n,n}$ lie on hyperplanes of the form
\[
1. \,\, z_i+w_i = z_j+w_j, \qquad 
2. \,\, z_i = z_j, \qquad 
3. \,\, w_i = w_j, \qquad 
4. \,\, z_i-z_k = w_k-w_j
\]
for $i,j,k \in [n]$. Let us count the number of codimension 1 faces of $\Sigma_{n,n}$ on each of these hyperplanes.

\smallskip

\noindent
1. The codimension 1 faces on hyperplane $z_i+w_i = z_j+w_j$ are those indexed by bipermutations of the form $b_1| \ldots | \mathbf{i} | \ldots | \mathbf{j} | \ldots | b_{2n-2}$. They are in bijective correspondence with the permutations of the multiset $(E-i-j) \cup (E-i-j) \cup \mathbf{i} \cup \mathbf{j}$, so the number of them is $(2n-2)!/2^{n-2}$.

\smallskip

\noindent
2. The codimension 1 faces on hyperplane $z_i = z_j$ are indexed by bipermutations of three types:

\smallskip

a) Bipermutations of the form $b_1| \ldots | ij | \ldots  | b_{2n-2}$ where the first $i$ and the first $j$ are in the same block. To specify such a bipermutation, we need to choose the element $\mathbf{k}$ that appears only once, and then choose a permutation of $(E-i-j-k) \cup (E-i-j-k) \cup \mathbf{k} \cup ij \cup \overline{i} \cup \overline{j}$ where $ij$ precedes both $\overline{i}$ and $\overline{j}$. There are $(n-2) \cdot [(2n-2)!/2^{n-3}]/3$ such permutations.

\smallskip

b. Bipermutations of the form $b_1| \ldots | i\mathbf{j} | \ldots  | b_{2n-2}$ where the first $i$ and $\mathbf{j}$ are in the same block. To specify such a bipermutation, we need  choose a permutation of $(E-i-j) \cup (E-i-j) \cup i\mathbf{j} \cup \overline{i}$ where $i\mathbf{j}$ precedes $\overline{i}$. There are $[(2n-2)!/2^{n-2}]/2$ such permutations.

\smallskip

c. Bipermutations of the form $b_1| \ldots | \mathbf{i}j | \ldots  | b_{2n-2}$ where $\mathbf{i}$ and the first $j$ are in the same block. Similarly, there are $[(2n-2)!/2^{n-2}]/2$ such permutations.

\smallskip

\noindent
Therefore the total number of codimension 1 faces on this hyperplane is $(2n-1)!/(3 \cdot 2^{n-2})$.

\smallskip

\noindent
3. The number of codimension 1 faces on hyperplane $w_i = w_j$ is also $(2n-1)!/(3 \cdot 2^{n-2})$.

\smallskip

\noindent
4. The faces on hyperplane $z_i - z_k = w_k- w_j$ are those indexed by bipermutations of the form $b_1| \ldots | i\overline{j} | \ldots | \mathbf{k} | \ldots | b_{2n-2}$ where the first $i$ and the second $j$ are in the same block. The number of them is the number of permutations of the multiset $(E-i-j-k) \cup (E-i-j-k) \cup \mathbf{k} \cup i\overline{j} \cup \overline{i} \cup j$ where $i\overline{j}$ comes after $j$ but before $\overline{i}$. There are $[(2n-2)!/2^{n-3}]/6$ such permutations.

\smallskip

The numbers $(2n-2)!/2^{n-2}$, \, $(2n-1)!/(3 \cdot 2^{n-2})$\, and $[(2n-2)!/2^{n-3}]/6$ are different for $n \geq 3$. Therefore the automorphism $g$ must map hyperplanes of type 1 to hyperplanes of type 1, it must map 
hyperplanes of type 4 to hyperplanes of type 4,
and
it must map hyperplanes of types 2 and 3 to hyperplanes of types 2 and 3.

\bigskip

Let $x_i=z_i+w_i$ and consider the braid arrangement given by hyperplanes $x_i=x_j$ for $i \neq j$. These are the hyperplanes of type 1 above, so the automorphism $g$ must leave this arrangement invariant.
The hyperplanes $x_1=x_2, \, x_2=x_3, \ldots,x_{n-1}=x_n$ cut out precisely two chambers of the braid arrangement, namely $x_1 > x_2 > \cdots > x_n$ and $x_1 < x_2 < \cdots < x_n$. Therefore the images of these $n-1$ hyperplanes under the automorphism $g$ must also cut out two chambers of the braid arrangement; thus they must be of the form $x_{\sigma(1)} = x_{\sigma(2)}, \, x_{\sigma(2)} = x_{\sigma(3)}, \ldots, x_{\sigma(n-1)} = x_{\sigma(n)}$, respectively, for some permutation $\sigma \in S_n$. 


The action of $g$ on $\N_n \times \N_n$ is equivalent to the action of $g$ on the dual space $\M_n \times \M_n$ where if $m \in \M_n \times \M_n$ then $g \cdot m$ is given by $g \cdot m (n) = m(g^{-1} \cdot n)$ for $n \in \N_n \times \N_n$.
Consider the normal vectors $\pm(\d^i-\d^j)$ to hyperplane $x_i=x_j$, where $\d^i := \e^i + \f^i \in \M_n \times \M_n$.
To preserve lengths, $g$ must send $\d^i - \d^{i+1}$ to one of the vectors $\pm(\d^{\sigma(i)} - \d^{\sigma(i+1)})$. To preserve the angles between these vectors -- computed through their dot products -- we must have one of two cases:
\[
1) \,\,\,
g \cdot (\d^1 - \d^2) = \d^{\sigma(1)} - \d^{\sigma(2)}, \,\,\, 
g \cdot (\d^2 - \d^3) = \d^{\sigma(2)} - \d^{\sigma(3)}, \,\,\, 
\ldots,
g \cdot (\d^{n-1} - \d^n) = \d^{\sigma(n-1)} - \d^{\sigma(n)}
\]
or 
\[
2) \,\,\,
g \cdot (\d^1 - \d^2) = \d^{\sigma(2)} - \d^{\sigma(1)}, \,\,\, 
g \cdot (\d^2 - \d^3) = \d^{\sigma(3)} - \d^{\sigma(2)}, \,\,\, 
\ldots,
g \cdot (\d^{n-1} - \d^n) = \d^{\sigma(n)} - \d^{\sigma(n-1)}
\]
Let us assume that we are in the first case. Since $g$ maps hyperplanes of types $2$ and $3$ to each other, it maps its normal vectors $\pm(\e^i-\e^j)$ and $\pm(\f^i-\f^j)$ to each other. Therefore, 
$g \cdot (\e^i+\f^i - \e^{i+1}-\f^{i+1}) = \e^{\sigma(i)} + \f^{\sigma(i)} - \e^{\sigma(i+1)} - \f^{\sigma(i+1)}$ can only hold if
\[
1a) \,\,\, g \cdot (\e^i-\e^{i+1}) = \e^{\sigma(i)} - \e^{\sigma(i+1)}, \,\,\,
g \cdot (\f^i-\f^{i+1}) = \f^{\sigma(i)} - \f^{\sigma(i+1)} \qquad \text{ for all } i,
\]
or 
\[
1b) \,\,\, g \cdot (\e^i-\e^{i+1}) = \f^{\sigma(i)} - \f^{\sigma(i+1)}, \,\,\,
g \cdot (\f^i-\f^{i+1}) = \e^{\sigma(i)} - \e^{\sigma(i+1)} \qquad
\text{ for all } i.
\]

In case 1a), $g$ acts like the same permutation $\sigma$ on the first and second factors of $\N_n \times \N_n$.
In case 1b), $g$ acts as above, followed by the swap map $s(z,w) = (w,z)$. These are indeed automorphisms of $\Sigma_{n,n}$.

In cases 2), $g$ acts as in 1a) or 1b), followed by the transformation $r(z,w) = -(z,w)$. However, $r$ is \textbf{not} an automorphism of the bipermutahedral fan. To see this, choose a bisubset $S|T$ with $S, T \neq [n]$ and $S \cap T \neq \emptyset$; this is possible for $n \geq 3$. Then $r$ maps the ray $\e_S + \f_T$ of $\Sigma_{n,n}$ to $\e_{E-S} + \f_{E-T}$, which is not a ray of $\Sigma_{n,n}$. Therefore cases 2a) and 2b) do not lead to automorphisms of $\Sigma_{n,n}$. 
The desired result follows. \end{proof}

\begin{proposition} \label{prop:symmetry}
The automorphism group of the bipermutahedron $\Pi_{n,n}$ is $S_n \times \Z_2$. This is the largest automorphism group among all polytopes whose normal fan is the bipermutahedral fan.
\end{proposition}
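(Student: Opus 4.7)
My plan is to combine Proposition \ref{prop:automorphisms} with a direct verification, in two steps: first I will extract a universal upper bound on $\Aut(P)$ for any polytope $P$ with normal fan $\Sigma_{n,n}$, and then I will show that $\Pi_{n,n}$ attains that bound.

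For the upper bound, let $P \subset \M_n \times \M_n$ be any polytope whose normal fan is $\Sigma_{n,n}$. Any linear automorphism $g$ of $\M_n \times \M_n$ with $g(P)=P$ dualizes to a linear automorphism $g^*$ of $\N_n \times \N_n$ preserving $\Sigma_{n,n}$. By Proposition \ref{prop:automorphisms}, $g^*$ lies in $S_n \times \Z_2 \times \R_{>0}$. The $\R_{>0}$-factor acts by positive scaling, which dualizes to positive scaling of $\M_n \times \M_n$; since $P$ is a bounded polytope of full dimension, only the identity scaling fixes $P$. Hence $\Aut(P) \subseteq S_n \times \Z_2$, giving a universal upper bound.

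For the lower bound I will appeal to the inequality description of $\Pi_{n,n}$ from Proposition \ref{prop:inequalities}. The simultaneous $S_n$-action on $\R^n \times \R^n$ sends the bisubset $S|T$ to $\sigma(S)|\sigma(T)$ and leaves the bound $-(|S|+|S-T|)(|T|+|T-S|)$ unchanged, so it preserves $\Pi_{n,n}$. The swap $(x,y) \mapsto (y,x)$ sends the inequality indexed by $S|T$ to the one indexed by $T|S$; since $T|S$ is a bisubset whenever $S|T$ is, and the bound is symmetric in $S$ and $T$, the swap also preserves $\Pi_{n,n}$. These two commuting actions embed $S_n \times \Z_2$ into $\Aut(\Pi_{n,n})$, which combined with the upper bound forces $\Aut(\Pi_{n,n}) = S_n \times \Z_2$. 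Maximality across all polytopes with normal fan $\Sigma_{n,n}$ is then immediate.

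I expect the main subtlety to be the treatment of the $\R_{>0}$-factor from Proposition \ref{prop:automorphisms}: it acts trivially on the cones of $\Sigma_{n,n}$ but is nontrivial as a linear map, and one must take care to rule it out using the boundedness of $P$ rather than any combinatorial argument at the fan level. Once that is in hand, the remainder is a short symmetry check on the facet inequalities together with the observation that the two actions commute.
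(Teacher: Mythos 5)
Your proposal is correct and follows essentially the same route as the paper: the lower bound comes from checking that the simultaneous $S_n$-action and the swap preserve the inequality description of Proposition \ref{prop:inequalities}, and the upper bound comes from Proposition \ref{prop:automorphisms} together with the observation that a nontrivial positive dilation factor cannot fix a bounded full-dimensional polytope. Your write-up merely spells out more explicitly the dualization step and the elimination of the $\R_{>0}$-factor, which the paper states in one line.
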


\begin{proof}
It is clear from the inequality description of Proposition \ref{prop:inequalities} that the bipermutahedron $\Pi_{n,n}$ is fixed by the simultaneous action of a permutation on both factors of $\M_n \times \M_n$, and by the swap map $s(z,w) = (w,z)$. On the other hand, dilations by positive constants other than $1$ cannot preserve a polytope. In view of Proposition \ref{prop:automorphisms}, the result follows.
\end{proof}

\section{The ample and nef cones: deformations of the bipermutahedron}\label{sec:amplecone}

Our next goal is to describe \textbf{all} the \emph{deformations of the bipermutahedron}. These are the polytopes whose normal fan is a coarsening of the bipermutahedral fan $\Sigma_{n,n}$. This set of polytopes is known as the \emph{nef cone} of the fan; it is a cone because it is closed under positive dilation and under Minkowski sums. Its interior, known as the \emph{ample cone}, consists of the polytopes whose normal fan equals $\Sigma_{n,n}$. A priori it is not clear that the ample cone is non-empty, but it is shown in \cite{ArdilaDenhamHuh1} that the bipermutahedron lives in the ample cone, while the harmonic polytope lives in the nef cone.

We will show that the nef cone of the bipermutahedral fan is cut out by two kinds of inequalities:

\medskip
\noindent
\textsf{A) Supermodular inequalities:}
Let $\B = b_1| \cdots |b_{h-1}| ij |  b_{h+1}| \cdots | b_{2n-2}$ 
be a bisequence of length $2n-2$ consisting of $2n-1$ singletons and one pair, and let $S=\{b_1, \ldots, b_{h-1}\}$ and $T=\{b_{h+1}, \ldots, b_{2n-1}\}$.  The corresponding \emph{supermodular inequality} is
\[
I_{\B}(h) := \big(h(S|ijT) + h(Sij|T)\big) - \big(h(Si|Tj) + h(Sj|Ti)\big) \geq 0
\]

\medskip
\noindent
\textsf{B) Up-down inequalities}: Let $\B = b_1 | \ldots | {\bf i} | \ldots | {\bf j} | \ldots |b_{2n-2}$ be a bisequence of length $2n-2$ consisting of $2n-2$ singletons, with non-repeated elements $i$ and $j$. Let $\overline{\B} = b_1 | \ldots | i | \overline{i} | \ldots | j | \overline{j} | \ldots |b_{2n-2}$, where as usual we write  $h$ and $\overline{h}$ for the first and second occurrences of each number $h$ in $\overline{\B}$.

Consider the $m$th bisubset $S|T$ of $\overline{\B}$ where $S=\{b_1, \ldots, b_m\}$ and $T=\{b_{m+1}, \ldots, b_{2n-2}\}$. 
If $b_m$ is unbarred and $b_{m+1}$ is barred in $\overline{\B}$, we say $S|T$ is an \emph{up bisubset} of $\overline{\B}$, and write $S|T  \curlyeqprec \overline{\B}$.
If $b_m$ is barred and $b_{m+1}$ is unbarred in $\overline{\B}$, we say $S|T$ is a \emph{down bisubset} of $\overline{\B}$, and write $S|T  \preccurlyeq \overline{\B}$.
%
Then the \emph{up-down inequality} associated to $\B$ is:
\[
I_{\B}(h) := \Big(\sum_{S|T \preccurlyeq \overline{\B}} h(S|T) \Big) - \Big(\sum_{S|T \curlyeqprec \overline{\B}} h(S|T) \Big) \geq 0.
\]

\begin{example} The following are examples of supermodular and up-down inequalities for $n=7$.

\smallskip
\noindent
A) If $\B = 7|2|{\bf 3}|4|2|14|5|1|5|6|6|7$ then the corresponding supermodular inequality is
\[
h(2347|14567) + h(12347|1567) \geq
h(12347|14567) + h(2347|1567) 
\]

\smallskip
\noindent
B) If $\B = 7|2|{\bf 3}|4|2|4|5|{\bf 1}|5|6|6|7$ then $\overline{\B} = 7|2|3\cyan{\mathbf{|}}\3\magenta{\mathbf{|}}4\cyan{\mathbf{|}}\2|\4\magenta{\mathbf{|}}5|1\cyan{\mathbf{|}}\1|\5\magenta{\mathbf{|}}6\cyan{\mathbf{|}}\6|\7$ 
where we mark the up switches (resp. down switches) from unbarred to barred (resp. from barred to unbarred) elements in cyan (resp. magenta). Those switches determine the corresponding up-down inequality:
\begin{eqnarray*}
&& h(237|124567) + h(2347|1567) + h(123457|67) \\
&\geq& {h(237|1234567)} + h(2347|124567) + {h(123457|1567)} + h(1234567|67). 
\end{eqnarray*}
\end{example}

\begin{proposition}\label{prop:deformations}
The polytopes in $\M_n \times \M_n$ whose normal fan is the bipermutahedral fan $\Sigma_{n,n}$ are those of the form
\begin{eqnarray*}
\sum_{e \in [n]} x_e &=& 0, \\
\sum_{e \in [n]} y_e &=& 0 , \\
\sum_{s \in S} x_s + \sum_{t \in T} y_t &\geq& h(S|T) \qquad \text{for each bisubset $S|T$ of $[n]$}
\end{eqnarray*}
where the function $h$ strictly satisfies the supermodular and up-down inequalities.
\end{proposition}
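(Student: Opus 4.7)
The plan is to apply the standard wall-crossing description of the ample cone of a complete simplicial fan: a function $h$ on the rays defines an ample divisor, equivalently cuts out a polytope with normal fan exactly $\Sigma$, if and only if for each wall $\tau$ with adjacent maximal cones $\sigma_1, \sigma_2$ and unique additional rays $v_1 \in \sigma_1$, $v_2 \in \sigma_2$, the unique linear relation $a v_1 + b v_2 + \sum c_i w_i = 0$ on the rays of $\sigma_1 \cup \sigma_2$ (normalized so $a, b > 0$) strictly satisfies $a h(v_1) + b h(v_2) + \sum c_i h(w_i) < 0$. By Proposition~\ref{prop:faces}, the walls of $\Sigma_{n,n}$ correspond to bisequences of length $2n-2$, and a direct count of element-multiplicities shows they split into exactly two combinatorial types: Type~A (one pair and $2n-3$ singletons) matches the supermodular case, and Type~B ($2n-2$ singletons with two non-repeated elements) matches the up-down case. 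The remaining task is to compute the wall-crossing relation in each case.

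For a Type~A wall $\B$ with pair $ij$ and flanking element-sets $S$ and $T$, the two adjacent bipermutations refine $\B$ by splitting the pair as $i|j$ or $j|i$, contributing new rays $\e_S + \e_i + \f_T + \f_j$ and $\e_S + \e_j + \f_T + \f_i$ respectively. Summing these two and comparing to $(\e_S + \f_{ijT}) + (\e_{Sij} + \f_T)$ gives the relation
\[
(\e_{Si} + \f_{Tj}) + (\e_{Sj} + \f_{Ti}) \;=\; (\e_S + \f_{ijT}) + (\e_{Sij} + \f_T),
\]
which, translated through the wall-crossing convention, is exactly the supermodular inequality of the statement.

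For a Type~B wall $\B$ with non-repeated elements $i, j$, the two adjacent bipermutations $\B_1, \B_2$ arise from $\overline\B$ by removing $\bar{j}$ or $\bar{i}$ respectively, and the two new rays correspond to the up-switches at the insertion positions $i|\bar{i}$ and $j|\bar{j}$. The key claim is that the unique linear relation among the $2n-1$ rays of $\sigma_{\B_1} \cup \sigma_{\B_2}$ is
\[
\sum_{S|T \curlyeqprec \overline\B} (\e_S + \f_T) \;-\; \sum_{S|T \preccurlyeq \overline\B} (\e_S + \f_T) \;\equiv\; 0 \quad \text{in } \N_n \times \N_n.
\]
To verify this, I will write the $m$th ray of $\overline\B$ as $\e_{S_m} + \f_{T_m}$ and use the structural fact that the up- and down-switches alternate as $u_1 < d_1 < u_2 < \cdots < u_k$, with every symbol in $(u_i, d_i]$ barred and every symbol in $(d_i, u_{i+1}]$ unbarred. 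A telescoping argument then collapses $\sum_U \e_{S_m} - \sum_D \e_{S_m}$ to $\e_{[n]}$ (since first occurrences are exhausted by position $u_k$) and $\sum_U \f_{T_m} - \sum_D \f_{T_m}$ to $\f_{T_{u_1}} = \f_{[n]}$ (since every element's second occurrence lies strictly after $u_1$), both of which vanish modulo the lineality $\R\e_{[n]} \oplus \R\f_{[n]}$ of $\N_n \times \N_n$. The wall-crossing convention then yields the up-down inequality. This Type~B telescoping is the main technical step; once it is in hand, the two classes of wall inequalities are exactly the convexity conditions characterizing the ample cone of $\Sigma_{n,n}$, completing the proof.
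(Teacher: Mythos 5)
Your proposal is correct and follows essentially the same route as the paper: both invoke the wall-crossing criterion for complete simplicial fans, classify the walls of $\Sigma_{n,n}$ into the same two types (one doubleton part versus two non-repeated elements), and exhibit the unique ray dependence for each wall, which yields precisely the supermodular and up-down inequalities. The only difference is presentational: you verify the Type-B dependence by a telescoping computation over the alternating up/down switches of $\overline{\B}$, while the paper encodes the same identity as the alternating sum along the spine of the bipartite tree $\Gamma(\B_\tau)$.
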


\begin{proof}
There is a general \emph{Wall-Crossing Criterion} \cite[Theorems 6.1.5--6.1.7]{Cox-Little-Schenck} that describes the nef cone of a convex fan $\Sigma$. Let us state it in the case of complete simplicial fans $\Sigma$ in a vector space $\N$ of dimension $d$.
Let $\RR(\Sigma)$ be a set of vectors that generate the rays of $\Sigma$, with one vector for each ray.
 Let $\tau$ be a codimension $1$ face of $\Sigma$, or \emph{wall}, that separates two full-dimensional chambers $\sigma$ and $\sigma'$ of $\Sigma$.  
Consider the rays $\r_1, \ldots, \r_{d-1}, \r, \r' \in \RR(\Sigma)$ such that 
\[
\tau=\cone(\r_1, \ldots, \r_{d-1}), \quad 
\sigma=\cone(\r_1, \ldots, \r_{d-1}, \r), \quad 
\sigma'=\cone(\r_1, \ldots, \r_{d-1}, \r')
\]
Up to scaling, there is a unique linear dependence of the form
\begin{equation}\label{eq:wallcross}
\displaystyle c\cdot {\r} + c'\cdot {\r'} =  \sum_{i=1}^{d-1}c_i\cdot {\r_{i}} 
\end{equation}
with $c, c' >0$. To the wall $\tau$ we associate the \emph{wall-crossing inequality}
\begin{equation}\label{ineq:wallcross}
\displaystyle  
I_{\Sigma,\tau}(h) :=  
c\cdot h({\r}) + c'\cdot h({\r'}) - \sum_{i=1}^{d-1}c_i\cdot h({\r_{i}}) \geq 0.
\end{equation}
Then the nef cone of $\Sigma$ consists of the polytopes in the dual space $\M = \N^*$ of the form
\[
P(h) = \{x \in \M \, : \, \r(x) \leq h(\r) \text{ for all rays } \r \in \RR(\Sigma)\}
\]
for the functions $h: \RR(\Sigma) \rightarrow \R$ that satisfy the wall-crossing inequalities \eqref{ineq:wallcross}.

\medskip

Let us apply the Wall-Crossing Criterion (reversing all inequalities) to the bipermutahedral fan $\Sigma_{n,n}$. It contains two kinds of walls, corresponding to the two possible kinds of bisequences of length $2n-2$.

\medskip
\noindent
A) The wall $\tau$ given by bisequence $b_1| \cdots |b_{h-1}| ij |  b_{h+1}| \cdots | b_{2n-1}$, which separates the chambers $\sigma$ and $\sigma'$ given by bipermutations $\B = b_1| \cdots | i | j | \cdots | b_{2n-1}$ and $\B' =  b_1| \cdots | j | i | \cdots | b_{2n-1}$ for $i \neq j$.

\medskip

The rays $\r \in \RR(\sigma) - \RR(\tau)$ and $\r' \in \RR(\sigma')-\RR(\tau)$ are  $\r = \e_{Si|Tj}$ and $\r'=\e_{Sj|Ti}$ for the sets $S=\{b_1, \ldots, b_{h-1}\}$ and $T=\{b_{h+1}, \ldots, b_{2n-1}\}$. The equation \eqref{eq:wallcross} is $\e_{Si|Tj} + \e_{Sj|Ti} = \e_{S|ijT} + \e_{Sij|T}$ in this case, so the wall-crossing inequality is
\[
h(Si|Tj) + h(Sj|Ti) \leq h(S|ijT) + h(Sij|T).
\]

\medskip
\noindent
B) The wall $\tau$ given by bisequence $\B_\tau = b_1| \cdots | {\bf i} | \cdots | {\bf j} | \cdots | b_{2n-1}$, which separates the chambers $\sigma$ and $\sigma'$  given by bipermutations $\B = b_1| \cdots | i | i | \cdots | {\bf j} | \cdots | b_{2n-1}$ and $\B' = b_1| \cdots | {\bf i} | \cdots | j | j | \cdots | b_{2n-1}$.

\medskip

The wall-crossing inequality can be nicely understood in terms of a bipartite graph $\Gamma(\B_\tau)$, defined as follows; see Figure \ref{fig:tree} for an example.

\smallskip
\noindent 
\emph{Vertices:} The top vertices are the $n$ distinct sets of the form $\{b_1, \ldots, b_i\}$ for $1 \leq i \leq 2n-2$ and the bottom vertices are the $n$ distinct sets of the form $\{b_i, \ldots, b_{2n-2}\}$ for $1 \leq i \leq 2n-2$. 

\smallskip
\noindent
\emph{Edges:} Let $\overline{\B_\tau} = b_1 | \ldots | i |\overline{i} | \ldots | j | \overline{j} | \ldots |b_{2n-2}$. Each of the $2n-1$ bisubsets $S|T$ of $\overline{\B_\tau}$ induces an edge connecting the top vertex $S$ to the bottom vertex $T$ in $\Gamma(\B_\tau)$. The two special edges $e$ and $e'$ corresponding to the two splits at $i|\overline{i}$ and at $j|\overline{j}$ are drawn with thick lines.

\smallskip
\noindent 
\emph{The spine:} Since the splits of $\overline{\B_\tau}$ are linearly ordered, the edges of $\Gamma(\B_\tau)$ are linearly ordered from left-to-right, and cannot cross. Therefore $\Gamma(\B_\tau)$ has no cycles; and since it has $2n$ vertices and $2n-1$ edges, it is a tree. Thus there is a unique path that connects the bottom left vertex $[n]$ to the top right vertex $[n]$: its edges correspond to the places where the permutation $\pi(\B_\tau)$ switches between barred and unbarred elements. Therefore this path contains the two special edges $e$ and $e'$.
We call this the \emph{spine} of $\Gamma$, and mark it with thick lines, alternating in color between cyan and magenta; the special edges $e$ and $e'$ are both cyan.

\begin{figure}[h]
\[
\Gamma(7|2|{\bf 3}|4|2|4|5|{\bf 1}|5|6|6|7) = 
\begin{tikzpicture}[scale=1.25,baseline=(current bounding box.center),
plain/.style={circle,draw,inner sep=1.5pt,fill=white},
root/.style={circle,draw,inner sep=1.5pt,fill=black}]
\node (1b) at (1.1,0) [root, label=below: $1234567$] {};
\node (2b) at (2.2,0) [root, label=below: $124567$] {};
\node (3b) at (3.3,0) [root, label=below: $14567$] {};
\node (4b) at (4.4,0) [root, label=below: $1567$] {};
\node (5b) at (5.5,0) [root, label=below: $567$] {};
\node (6b) at (6.6,0) [root, label=below: $67$] {};
\node (7b) at (7.7,0) [root, label=below: $7$] {};
\node (2a) at (1.1,1.5) [root, label=above: $7$] {};
\node (3a) at (2.2,1.5) [root, label=above: $27$] {};
\node (4a) at (3.3,1.5) [root, label=above: $237$] {};
\node (5a) at (4.4,1.5) [root, label=above: $2347$] {};
\node (6a) at (5.5,1.5) [root, label=above: $23457$] {};
\node (7a) at (6.6,1.5) [root, label=above: $123457$] {};
\node (8a) at (7.7,1.5) [root, label=above: $1234567$] {};
\draw (2a) -- (1b);
\draw (3a) -- (1b);
\draw [line width=1mm, cyan] (4a) -- (1b);
\draw  [line width=0.5mm, magenta](4a) -- (2b);
\draw  [line width=0.5mm, cyan](5a) -- (2b);
\draw (5a) -- (3b);
\draw  [line width=0.5mm, magenta](5a) -- (4b);
\draw (6a) -- (4b);
\draw [line width=1mm, cyan] (7a) -- (4b);
\draw (7a) -- (5b);
\draw  [line width=0.5mm, magenta](7a) -- (6b);
\draw  [line width=0.5mm, cyan](8a) -- (6b);
\draw (8a) -- (7b);
\end{tikzpicture}
\]
\caption{The bipartite graph for 
$\B_\tau=7|2|{\bf 3}|4|2|4|5|{\bf 1}|5|6|6|7$ and  
$\overline{\B_\tau} = 7|2|3\cyan{\mathbf{|}}\3\magenta{\mathbf{|}}4\cyan{\mathbf{|}}\2|\4\magenta{\mathbf{|}}5|1\cyan{\mathbf{|}}\1|\5\magenta{\mathbf{|}}6\cyan{\mathbf{|}}\6|\7$.
\label{fig:tree}}
\end{figure}

The rays $\r_1, \ldots, \r_{2n-2}$ of $\tau$ correspond to the ordinary edges of $\Gamma$ and the rays $\r = \RR(\sigma) - \RR(\tau)$ and 
$\r' = \RR(\sigma') - \RR(\tau)$ correspond to the two special magenta edges of $\Gamma$. Notice that the alternating sum of the rays corresponding to the spine of $\Gamma$ equals $\e_E + \f_E = 0$ in $\N_n \times \N_n$. In the example above this equality reads
\[
\e_{237|1234567} - \e_{237|124567} 
+\e_{2347|1234567} - \e_{237|1567} 
+\e_{123457|1567} - \e_{123457|67} 
+ \e_{1234567|67} = 0 \quad \text{in } \N_7 \times \N_7.
\]
This must be the unique wall-crossing dependence \eqref{eq:wallcross}, so
the wall-crossing inequality for the wall $\tau$ is precisely the up-down inequality for the bisequence $B_\tau$.
\end{proof}

It is not at all clear from Proposition \ref{prop:deformations} whether a bipermutahedron exists; that is, whether the ample cone (the interior of the nef cone) of the bipermutahedral fan is non-empty. We do know that it is non-empty, because it contains the support function $\Pi(S|T) = - \big(|S| + |S-T|\big) \cdot \big(|T| + |T-S| \big)$ of the bipermutahedron. However, even with such a simple, explicit description, it is not so easy to see why this function satisfies the wall-crossing inequalities!

\section{The Minkowski quotient of the bipermtuahedron and the harmonic polytope is 2}

It is instructive to verify directly that the support function of the bipermutahedron and the harmonic polytope satisfy the wall-crossing inequalities, and we do so in this section. This computation has a stronger,  unexpected consequence: it implies that in any dimension, the \emph{Minkowski quotient} of the bipermutahedron $\Pi_{n,n}$ and the harmonic polytope $H_{n,n}$ equals $2$.

\begin{proposition} \label{prop:bipermineqs}
The support function of the bipermutahedron $\Pi_{n,n}$
\[
\Pi(S|T) = - \big(|S| + |S-T|\big) \cdot \big(|T| + |T-S| \big) \qquad \text{for each bisubset $S|T$ of $[n]$}.
\]
satisfies the strict wall-crossing inequalities of the bipermutahedral fan.
\end{proposition}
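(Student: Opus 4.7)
The plan is to first simplify the support function into a more tractable form. Since $S \cup T = [n]$ for any bisubset $S|T$, we have $|S \cap T| = |S| + |T| - n$, so a short computation yields the key identity
\[
\Pi(S|T) \;=\; -\bigl(2|S|-|S\cap T|\bigr)\bigl(2|T|-|S\cap T|\bigr) \;=\; \bigl(|S|-|T|\bigr)^2 - n^2.
\]
With this identity in hand, both families of wall-crossing inequalities reduce to short computations.

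For the supermodular inequalities I would compute $\Pi(S|ijT) + \Pi(Sij|T) - \Pi(Si|Tj) - \Pi(Sj|Ti)$ directly, introducing indicators $\alpha = [i\notin S]$, $\beta = [j\notin S]$, $\gamma = [i\notin T]$, $\delta = [j\notin T]$ to track how each $|S'|-|T'|$ shifts upon adding $i$ or $j$. The dependence on $|S|-|T|$ then cancels and the expression collapses to $2(\alpha+\gamma)(\beta+\delta)$. Strictness follows from the combinatorics of $\B$: since $i$ occurs once in the pair block $ij$ and at most once more among the surrounding singletons, $i$ cannot lie in both $S$ and $T$, so $\alpha + \gamma \geq 1$ and, symmetrically, $\beta + \delta \geq 1$. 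Hence $I_{\B}(\Pi) \geq 2 > 0$.

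For the up-down inequalities, the main step is to obtain a uniform formula for $\Pi$ on bisubsets of $\overline{\B}$. Since every element of $[n]$ appears in $\overline{\B}$ exactly twice with its unbarred copy before its barred copy, the bisubset $S|T$ obtained by splitting $\overline{\B}$ after position $m$ satisfies $|S| = u(m)$ and $|T| = n - b(m)$, where $u(m)$ and $b(m)$ count unbarred and barred positions in $\{1,\ldots,m\}$. Hence $|S|-|T| = m-n$ and $\Pi(S|T) = -m(2n-m)$, so the desired strict inequality becomes $\sum_{\text{up}} m(2n-m) > \sum_{\text{down}} m(2n-m)$.

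To close, I would interpret $m(2n-m) = \#\{(p,q) : 1 \leq p \leq m < q \leq 2n\}$ and swap the order of summation; since the number of up minus down switches in an interval $[p, q-1]$ equals $\tau(q)-\tau(p)$ (where $\tau(m)\in\{0,1\}$ records whether position $m$ of $\overline{\B}$ is barred), a brief rearrangement gives
\[
\sum_{\text{up}} m(2n-m) \;-\; \sum_{\text{down}} m(2n-m) \;=\; \sum_{p\text{ barred}} (2p-2n-1) \;=\; \sum_{e \in [n]} \bigl(p_2(e)-p_1(e)\bigr),
\]
where $p_1(e) < p_2(e)$ are the two positions of $e$ in $\overline{\B}$; this is a sum of strictly positive terms. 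The main obstacle will be packaging the up-down manipulation cleanly, but the identity $\Pi(S|T) = (|S|-|T|)^2 - n^2$ does all the heavy lifting, reducing both inequality families to brief calculations.
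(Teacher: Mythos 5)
Your proposal is correct, and it verifies the same two families of inequalities (supermodular and up--down, as identified in Proposition \ref{prop:deformations}) by a genuinely different computational route. Your key identity $\Pi(S|T)=(|S|-|T|)^2-n^2$ is valid because every bisubset has $S\cup T=[n]$; it is the same fact the paper uses implicitly when it writes $-\Pi(S|T)=r(2n-r)$ with $r=|S|+|S-T|$, but you exploit it systematically. For the supermodular inequalities, your indicator computation $I_{\B}(\Pi)=2(\alpha+\gamma)(\beta+\delta)$ replaces the paper's three-case analysis and reproduces its values ($2$, $2$, $4$ in cases (i)--(iii)); the strictness argument ($i$, resp.\ $j$, lies in the block $ij$ and hence in at most one of $S,T$, so $\alpha+\gamma\geq 1$ and $\beta+\delta\geq 1$) is exactly the combinatorial fact the paper relies on. For the up--down inequalities the two proofs diverge more substantially: the paper interprets $-\Pi(S|T)=m(2n-m)$ as the area of a rectangle in a $2n\times 2n$ board and argues by a graphical sliding/rearrangement (equivalently, the run-length computation with the $a_i$ and the partial-sum inequality $a_1+a_3+\cdots\geq a_2+a_4+\cdots$), whereas you double-count pairs $(p,q)$ and telescope $\tau(q)-\tau(p)$ to obtain the exact evaluation $I_{\B}(\Pi)=\sum_{e\in[n]}\bigl(p_2(e)-p_1(e)\bigr)$. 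I checked this against the paper's running example $\B=7|2|{\bf 3}|4|2|4|5|{\bf 1}|5|6|6|7$, where both sides equal $25$. Your formula is in fact sharper than what the paper records: it gives $I_{\B}(\Pi)\geq n$ at once (each element contributes at least $1$), which is precisely the strengthened bound the paper derives at the end of its proof and later uses in Theorem \ref{th:Pi/H}; so your argument would slot into that application without change. What the paper's approach buys is the geometric picture (areas sliding in the square board), which motivates the harmonic-polytope computation that follows; what yours buys is a uniform, case-free algebraic identity with an exact closed form for the up--down excess.
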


\begin{proof}
We already know this statement must be true because the normal fan of the bipermutahedron $\Pi_{n,n}$ is the bipermutahedral fan, so its support function $\Pi$ must be in the ample cone of $\Sigma_{n,n}$. However, we wish to give a direct proof that will allow us to derive a stronger result. 

\medskip

\noindent \textsf{A) Supermodular inequalities:}
Let $\B = b_1| \cdots |b_{h-1}| ij |  b_{h+1}| \cdots | b_{2n-2}$ 
be a bisequence of length $2n-2$ consisting of $2n-1$ singletons and one pair. Let $S=\{b_1, \ldots, b_{h-1}\}$ and $T=\{b_{h+1}, \ldots, b_{2n-1}\}$.  
Let $s=|S|, t=|T|,$ and $u = |S \cap T|$. 
Since $i$ and $j$ appear in the $h$th part of $\B$, each one appears at most once in the remaining entries of $\B$, and at least one of them must appear. Thus we have three cases, where the computations are straightforward:

\smallskip
\noindent
(i) \emph{$i$ and $j$ appear on the same side of $ij$ in $\B$; say $i,j \in S$.}
In this case we have
\begin{eqnarray*} 
|S| = s, \,\,\, |Tij| = t+2, \,\,\, |S \cap (Tij)| = u+2, \qquad &&
|Sij| = s, \,\,\, |T| = t, \,\,\, |(Sij) \cap T)| = u, \\
|Si| = s, \,\,\, |Tj| = t+1, \,\,\, |(Si) \cap (Tj)| = u+1, \qquad &&
|Sj| = s, \,\,\, |Ti| = t+1, \,\,\, |(Sj) \cap (Ti))| = u+1,
\end{eqnarray*}
from which the corresponding supermodular inequality follows readily:
\begin{eqnarray*}
I_{\B}(\Pi) &:=&  \big(\Pi(S|ijT) + \Pi(Sij|T)\big) - \big(\Pi(Si|Tj) + \Pi(Sj|Ti)\big) \\
&=& 
-[s+(s-u-2)] \cdot [(t+2)+(t-u)] -[s+(s-u)] \cdot[t+(t-u)] \\
& & + \, [s+(s-u-1)] \cdot [(t+1)+(t-u)] + [s+(s-u-1)] \cdot [(t+1)+(t-u)] \\
&=& 2 > 0.
\end{eqnarray*}

\smallskip
\noindent
(ii) \emph{$i$ and $j$ appear on different sides of $ij$ in $\B$; say $i \in S$ and $j \in T$.}
Similarly,
\begin{eqnarray*}
I_{\B}(\Pi) &=&  
-[s+(s-u-1)] \cdot [(t+1)+(t-u)] -[(s+1)+(s-u)] \cdot[t+(t-u-1)] \\
& & + \, [s+(s-u)] \cdot [t+(t-u)] + [(s+1)+(s-u-1)] \cdot [(t+1)+(t-u-1)] \\
&=& 2 > 0.
\end{eqnarray*}

\smallskip
\noindent
(iii) \emph{Only one of $i$ and $j$ appears again in $\B$; say $i \in S$ and $j \notin S,T$.}
Similarly,
\begin{eqnarray*}
I_{\B}(\Pi) &=&  
-[s+(s-u-1)] \cdot [(t+2)+(t-u+1)] -[(s+1)+(s-u+1)] \cdot[t+(t-u)] \\
& & + \, [s+(s-u)] \cdot [(t+1)+(t-u+1)] + [(s+1)+(s-u)] \cdot [(t+1)+(t-u)] \\
&=& 4 > 0.
\end{eqnarray*}

\medskip
\noindent
\textsf{B) Up-down inequalities}: Let $\B = b_1 | \ldots | {\bf i} | \ldots | {\bf j} | \ldots |b_{2n-2}$ be a bisequence of length $2n-2$ consisting of $2n-2$ singletons, with non-repeated elements $i$ and $j$. Let $\overline{\B} = b_1 | \ldots | i | \overline{i} | \ldots | j | \overline{j} | \ldots |b_{2n-2}$.

Proving that $I_\B(\Pi) \geq 0$ is more interesting in this case; we do it by interpreting this quantity as an area. 
Let us draw a $2n \times 2n$ square board whose rows and columns are indexed by the entries of $\overline{\B}$, and draw a vertical and horizontal lines 
where there are switches between barred and unbarred labels. There is one intersection point along the main diagonal for each switch, and thus for each term of the wall-crossing inequality $I_\B(\Pi) \geq 0$. 
Figure \ref{fig:table0} illustrates this construction for $\B = 7|2|{\bf 3}|4|2|4|5|{\bf 1}|5|6|6|7$. 

Let us analyze one of the terms $\Pi(S|T) = - \big(|S| + |S-T|\big) \cdot \big(|T| + |T-S| \big)$ of the inequality, corresponding to a switch between a barred and an unbarred element in $\overline{\B}$, and to an intersection points $p$ along the diagonal. Since the bisubset at that switch equals $S|T$ and $i$ precedes $\overline{i}$ for all $i$, the rows above $p$ are indexed by $S \cup \overline{S-T}$ while the columns to the right of $p$ are indexed by $(T-S) \cup \overline{T}$. Therefore $-\Pi(S|T)$ is precisely the area of the rectangle going from $p$ to the top right corner of the square.
In the example of Figure \ref{fig:table0}, for the switch from $4$ to $\2$, we have
$-\Pi(2347 | 124567) = (|2347| + |\3|) \cdot (|\1\2\4\5\6\7| + |156|) = 5 \cdot 9 = 45$. 

\begin{figure}[h]
\begin{center}
\includegraphics[height=3in]{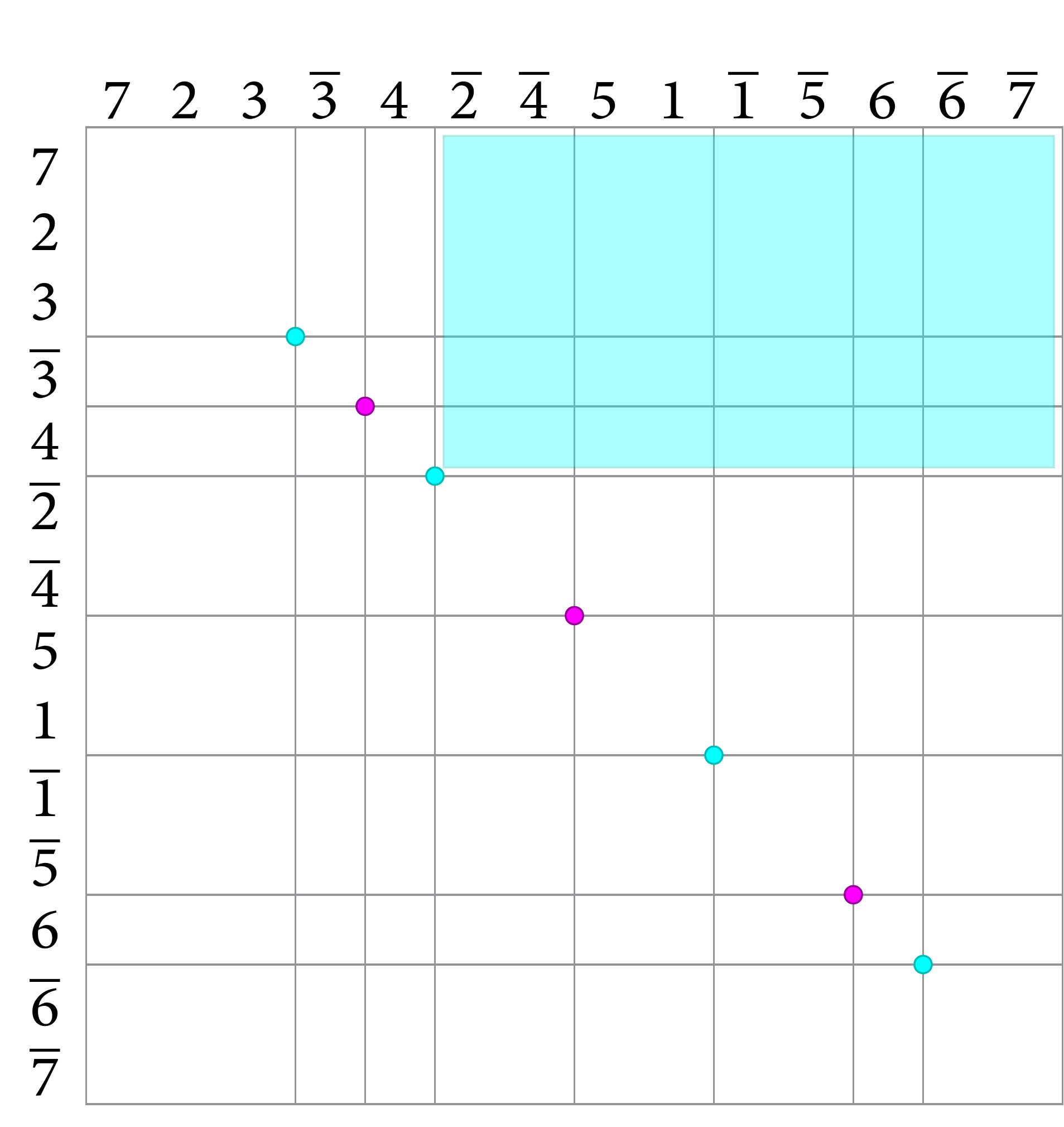}
\captionof{figure}{The table for $\B = 7|2|{\bf 3}|4|2|4|5|{\bf 1}|5|6|6|7$ and the area interpretation of $-\Pi(2347 | 124567)$.
\label{fig:table0}}
\end{center}
\end{figure}
Thus we may interpret the up-down inequality associated to $\B$
\[
I_{\B}(\Pi) := \Big(\sum_{S|T \preccurlyeq \overline{\B}} \Pi(S|T) \Big) - \Big(\sum_{S|T \curlyeqprec \overline{\B}} \Pi(S|T) \Big) \geq 0.
\]
as an alternating sum of areas that should be positive. This is best understood graphically, as shown in Figure \ref{fig:wall-crossing}. The figure verifies the up-down inequality for $\B = 7|2|{\bf 3}|4|2|4|5|{\bf 1}|5|6|6|7$, namely
\begin{eqnarray*}
&& \Pi(237|124567) + \Pi(2347|1567) + \Pi(123457|67) \\
&\geq& {\Pi(237|1234567)} + \Pi(2347|124567) + {\Pi(123457|1567)} + \Pi(1234567|67),
\end{eqnarray*}
but this graphical argument is entirely general. 

\begin{figure}[h]
\begin{center}
\includegraphics[width=6.5in]{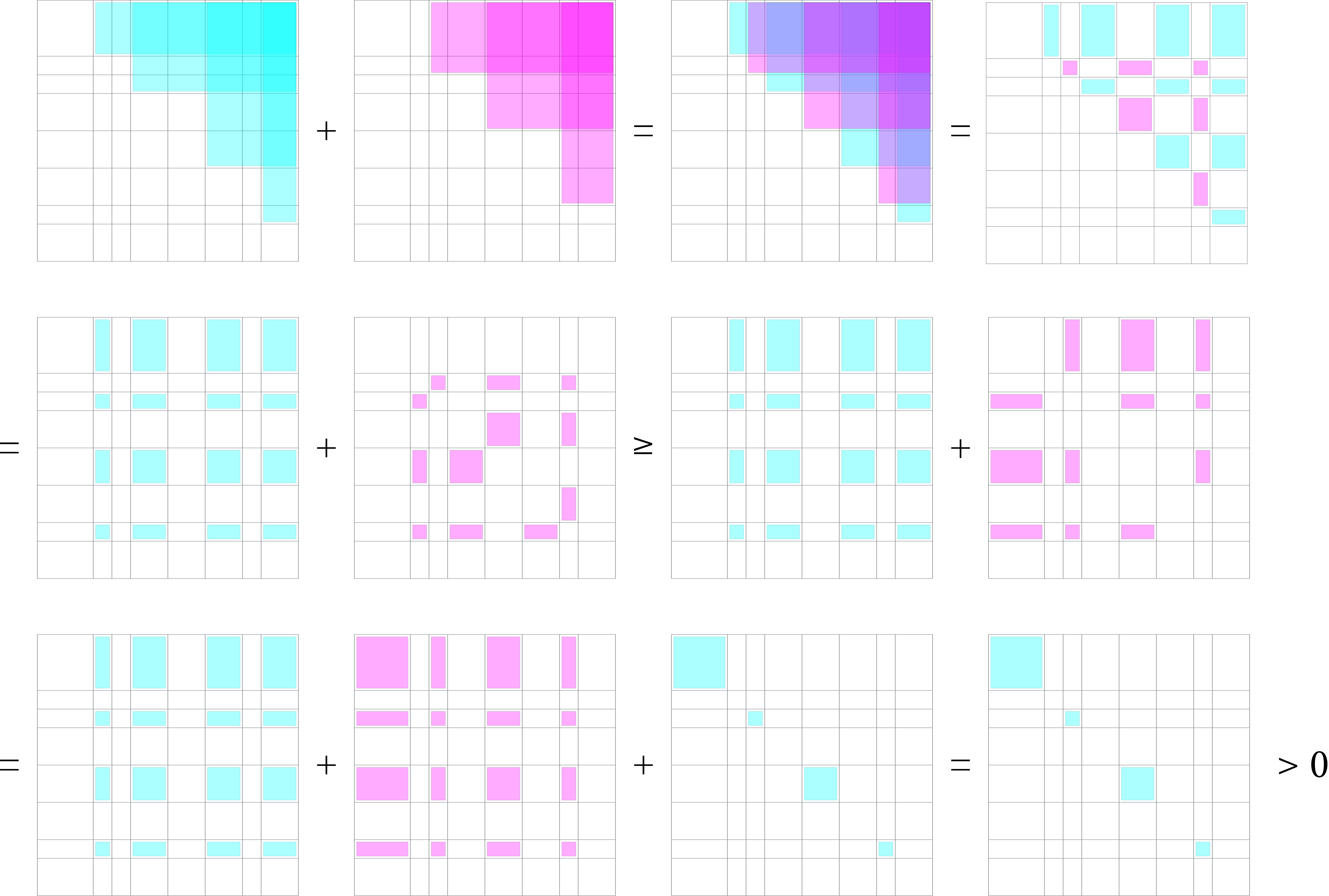}
\end{center}
\caption{The up-down inequality $I_\B(\Pi) \geq 0$ for $\B = 7|2|{\bf 3}|4|2|4|5|{\bf 1}|5|6|6|7$. Positive areas are shown in cyan and negative areas are shown in magenta.
\label{fig:wall-crossing}}
\end{figure}

The inequality in this graphical computation deserves an explanation. In the $(2i+1)$th column, we are sliding up $i$ (negative) magenta rectangles, replacing them with $i$ new magenta rectangles of larger total area; this is because every row index $\overline{j}$ that appeared among the first $i$ rectangles, the row index $j$ -- which precedes $\overline{j}$ in $\overline{\B}$ -- must also appear among the second $i$ rectangles. The same argument holds for the rows. 

The last equality also deserves an explanation. The $k \times k$ grid of cyan rectangles and the $k \times k$ grid of magenta rectangles both have area $n^2$, because their column labels and row labels are either $\{1, \ldots, n\}$ or $\{\1, \ldots, \n\}$.

Finally, let us remark that the last step actually shows the stronger inequality
\[
I_\B(\Pi) \geq n,
\]
since the smallest possible area of a set of squares whose side lengths are integers adding up to $n$ is $1^2+\cdots + 1^2 = n$.

\smallskip

Though it is perhaps less enlightening, we may rewrite this argument algebraically as follows. 
Let $a_1, a_2, \ldots, a_{2k-1}, a_{2k}$ be the lengths of the consecutive strings of barred and unbarred subsequences of $\overline{\B}$; these are the lengths of the segments along the edges of the square. For $\B = 7|2|{\bf 3}|4|2|4|5|{\bf 1}|5|6|6|7$  we have $(a_1, \ldots, a_8) = (3,1,1,2,2,2,1,2)$. Then

\begin{eqnarray*} 
I_\B(\Pi) &= &\sum_{i \text{ odd}} \Big(\sum_{j \leq i} a_j\Big) \Big(\sum_{j > i} a_j\Big) - 
\sum_{i \text{ even}} \Big(\sum_{j \leq i} a_j\Big) \Big(\sum_{j > i} a_j\Big) 
=  \sum_{2i+1 < 2j} a_{2i+1} a_{2j} - \sum_{2i < 2j+1} a_{2i}a_{2j+1}  \\
&=& \Big(\sum a_{2i+1}a_{2j}\Big) - 2 \Big(\sum_{2i < 2j+1} a_{2i}a_{2j+1}\Big) \,\, \geq \,\, 
\Big(\sum a_{2i+1} a_{2j}\Big) - 2\Big(\sum_{2i+1 < 2j+1} a_{2i+1}a_{2j+1}\Big) \\
&=& \Big(\sum a_{2i+1}\Big) \Big(\sum a_{2j}\Big) - \Big(\sum a_{2i+1} \Big)^2 + \Big(\sum a_{2i+1}^2\Big) \,\, = \,\,  \sum a_{2i+1}^2 > 0,
\end{eqnarray*}
where we are using that for any $j$ we have $a_1 + a_3 + \cdots + a_{2j-1} \geq a_2 + a_4 + \cdots + a_{2j}$  since $i$ precedes $\overline{i}$ in $\overline{\B}$ for all $i$, and $a_1 + a_3 + \cdots + a_{2k-1} = a_2 + a_4 +  \cdots + a_{2k} = n$.
\end{proof}

\newpage

As shown in \cite{ArdilaEscobar}, the harmonic polytope $H_{n,n}$ is given by the following minimal inequality description:
\begin{eqnarray*}
\sum_{e \in [n]} x_e &=& \frac{n(n+1)}2 + 1, \\
\sum_{e \in [n]} y_e &=& \frac{n(n+1)}2 + 1, \\
\sum_{s \in S} x_s + \sum_{t \in T} y_t &\geq& \frac{|S|(|S|+1) + |T|(|T|+1)}2  + 1 \qquad \text{for each bisubset $S|T$ of $[n]$}.
\end{eqnarray*}
We translate it by the vector $-(\frac{n+1}2 + \frac1n)(\e_E + \f_E)$ so that it lands on the subspace $\M_n \times \M_n$ given by $\sum_{e \in [n]} x_e = \sum_{e \in [n]} y_e = 0$. Then we have the following statement.

\begin{proposition} \label{prop:harmonicineqs}
Let $f(x) = x\left(\frac{x-n}2-\frac1n\right)$.
The support function of the harmonic polytope $H_{n,n}$
\[
H(S|T) =  f(|S|) + f(|T|)  + 1
 \qquad \text{for each bisubset $S|T$ of $[n]$}
\]
satisfies the weak wall-crossing inequalities of the bipermutahedral fan.
\end{proposition}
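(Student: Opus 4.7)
The plan is to verify both families of wall-crossing inequalities by direct algebraic computation, exploiting that $H(S|T)$ depends on $S,T$ only through $|S|$ and $|T|$. Two properties of $f(x) = \tfrac{x^{2}}{2} - \bigl(\tfrac{n}{2} + \tfrac{1}{n}\bigr)x$ drive the whole proof: its second difference $\Delta^{2}f$ is the constant $1$, and $f(n) = -1$.

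For the supermodular inequalities, I would follow the case analysis of Proposition \ref{prop:bipermineqs}, splitting according to how the doubleton $\{i,j\}$ interacts with $S,T$: both on the same side, on opposite sides, or only one appears elsewhere. In each case the cardinalities $|S|, |Si|, |Sj|, |Sij|$ and $|T|, |Ti|, |Tj|, |ijT|$ are either all equal or form successive integers, so
\[
I_{\B}(H) = \bigl[f(|S|) + f(|Sij|) - f(|Si|) - f(|Sj|)\bigr] + \bigl[f(|T|) + f(|ijT|) - f(|Ti|) - f(|Tj|)\bigr]
\]
reduces to a sum of at most two second differences of $f$. Using $\Delta^{2}f = 1$, a short check gives $I_{\B}(H) = 1, 0, 1$ in the three cases respectively, all $\geq 0$ as required. (Note that case (ii), where $I_{\B}(H)=0$, is exactly where the normal fan of $H_{n,n}$ coarsens the bipermutahedral fan.)

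For the up-down inequalities, I would parametrize $\overline{\B}$ by its run-length sequence $a_{1}, \ldots, a_{2k}$, alternating unbarred/barred with $a_{1}$ unbarred and $a_{2k}$ barred, and set
\[
U_{j} = a_{1} + a_{3} + \cdots + a_{2j-1}, \qquad R_{j} = a_{2} + a_{4} + \cdots + a_{2j}.
\]
The key combinatorial observation is that the unbarred occurrence of each element of $[n]$ precedes its barred occurrence in $\overline{\B}$, so $|S|$ at any split equals the number of unbarred letters to the left of the split, and $|T|$ equals $n$ minus the number of barred letters to the left. Consequently, at the $j$-th up-switch $|S| = U_{j}$ and $|T| = n - R_{j-1}$, while at the $j$-th down-switch $|S| = U_{j}$ and $|T| = n - R_{j}$.

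Substituting into $I_{\B}(H) = \sum_{S|T \preccurlyeq \overline{\B}} H(S|T) - \sum_{S|T \curlyeqprec \overline{\B}} H(S|T)$ (with $k$ up-switches and $k-1$ down-switches), the constants contribute $(k-1) - k = -1$, while the $f(U_{j})$ and $f(n - R_{j})$ terms pair off cleanly, leaving only the endpoint contributions at $U_{k} = n$ and $R_{0} = 0$:
\[
I_{\B}(H) = -f(U_{k}) - f(n - R_{0}) - 1 = -2 f(n) - 1 = 1.
\]
Thus every up-down inequality holds with uniform positive slack $1$. The only real work is the bookkeeping of $|S|$ and $|T|$ at each switch in terms of the run-length data; once that is in place, the alternating sum collapses to a single constant, which is the structural punchline of the argument. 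As a corollary of these sharp values $I_{\B}(H) \in \{0,1\}$ together with $I_{\B}(\Pi) \geq 2$ (and $\geq n$ in the up-down case), one recovers the bound $\Pi_{n,n}/H_{n,n} \leq 2$ promised in the section title.
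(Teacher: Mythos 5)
Your proposal is correct and follows essentially the same route as the paper: the same three-case analysis for the supermodular inequalities (yielding slacks $1,0,1$ via the constant second difference of $f$), and the same telescoping of the alternating up-down sum to $-2f(n)-1=1$, with your run-length bookkeeping $U_j, R_j$ playing the role of the spine vertices of the bipartite tree $\Gamma(\B)$ used in the paper. The closing remark that the slack values force $\Pi_{n,n}/H_{n,n}\le 2$ is a correct (one-sided) anticipation of Theorem \ref{th:Pi/H}.
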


\begin{proof}
Again, we already know this statement must be true because the normal fan of the harmonic polytope is a coarsening of the bipermutahedral fan \cite{ArdilaEscobar}, so its support function $H$ must be in the nef cone of $\Sigma_{n,n}$. However, giving a direct proof will allow us to derive a stronger result. 

\medskip

\noindent \textsf{A) Supermodular inequalities:}
Let $\B = b_1| \cdots |b_{h-1}| ij |  b_{h+1}| \cdots | b_{2n-2}$ 
be a bisequence of length $2n-2$ consisting of $2n-1$ singletons and one pair. Let $S=\{b_1, \ldots, b_{h-1}\}$ and $T=\{b_{h+1}, \ldots, b_{2n-1}\}$.  
Let $s=|S|, t=|T|,$ and $u = |S \cap T|$. As in the proof of Proposition \ref{prop:bipermineqs}, we consider three cases:

%

\smallskip
\noindent
(i) $i$ and $j$ appear on the same side of $ij$ in $\B$; say $i,j \in S$:
\smallskip

\noindent
\begin{eqnarray*}
I_{\B}(H) &:=&  \big(H(S|ijT) + H(Sij|T)\big) - \big(H(Si|Tj) + H(Sj|Ti)\big) \\
&=& 
\big(f(s) + f(t+2)\big) + \big(f(s) + f(t)\big) - \big(f(s)+f(t+1)\big) - \big(f(s)+f(t+1)\big) \\
&=& 1 > 0.
\end{eqnarray*}

\smallskip
\noindent
(ii) $i$ and $j$ appear on different sides of $ij$ in $\B$; say $i \in S$ and $j \in T$:
\smallskip

\noindent
\begin{eqnarray*}
I_{\B}(H) &=&  
\big(f(s) + f(t+1)\big) + \big(f(s+1) + f(t)\big) - \big(f(s)+f(t)\big) - \big(f(s+1)+f(t+1)\big) \\
&=& 0.
\end{eqnarray*}

\smallskip
\noindent
(iii) Only one of $i$ and $j$ appears again in $\B$; say $i \in S$ and $j \notin S,T$:
\smallskip

\noindent
\begin{eqnarray*}
I_{\B}(H) &=&  
\big(f(s) + f(t+2)\big) + \big(f(s+1) + f(t)\big) - \big(f(s)+f(t+1)\big) - \big(f(s+1)+f(t+1)\big) \\
&=& 1 > 0.
\end{eqnarray*}

\medskip
\noindent
\textsf{B) Up-down inequalities}: Let $\B = b_1 | \ldots | {\bf i} | \ldots | {\bf j} | \ldots |b_{2n-2}$ be a bisequence of length $2n-2$ consisting of $2n-2$ singletons, with non-repeated elements $i$ and $j$. Let $\overline{\B} = b_1 | \ldots | i | \overline{i} | \ldots | j | \overline{j} | \ldots |b_{2n-2}$.

If the spine of the bipartite graph $\Gamma(\B)$ has vertex labels $E=T_1, S_1, T_2, S_2, \ldots, T_k, S_k=E$, then the up-down inequality reads
\begin{eqnarray*}
I_{\B}(H) &=&  H(S_1|T_1) - H(S_1|T_2) + H(S_2|T_2) - H(S_2|T_3) + \cdots - H(S_{k-1}|T_k) + H(S_k|T_k) \\
&=& 
- \big(f(|S_1| + f(|T_1|)+1\big) 
+ \big(f(|S_1| + f(|T_2|)+1\big)  
- \big(f(|S_2| + f(|T_2|)+1\big)  + \\
& & 
+ \big(f(|S_2| + f(|T_3|)+1\big) - \cdots 
+  \big(f(|S_{k-1}| + f(|T_k|)+1\big) 
- \big(f(|S_k| + f(|T_k|)+1\big) \\
&=& -f(n)-f(n)-1 \\
&=& 1>0.
\end{eqnarray*}
We conclude that $H$ satisfies all the wall-crossing inequalities.
\end{proof}

It is said that $Q$ is a \emph{weak Minkowski summand} of $P$ if the normal fan of $Q$ refines the normal fan of $P$; this is equivalent to the existence of a scalar $\lambda$ such that $\lambda Q$ is a Minkowski summand of $P$; that is, there exists a polytope $R$ such that $P = \lambda Q + R$. The following parameter makes the situation more precise.

\begin{definition}
If $P$ and $Q$ are polytopes in $\R^d$, we define their \emph{Minkowski quotient}
\[
P/Q = \max \{\lambda \geq 0 \, : \, \lambda Q \text{ is a Minkowski summand of } P\}.
\]
\end{definition}

Note that $Q$ is a weak Minkowski summand of $P$ if and only if $P/Q > 0$.

\begin{theorem} \label{th:Pi/H}
The Minkowski quotient of the bipermutahedron and the harmonic polytope is
\[
\Pi_{n,n} / H_{n,n} = 2
\]
for all integers $n \geq 2$.
\end{theorem}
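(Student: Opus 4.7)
The plan is to characterize the Minkowski quotient via the wall-crossing inequalities of Proposition \ref{prop:deformations}. Since the normal fan of $H_{n,n}$ coarsens $\Sigma_{n,n}$ (stated in Propositions \ref{prop:harmonicineqs}), the set of $\lambda \geq 0$ for which $\lambda H_{n,n}$ is a Minkowski summand of $\Pi_{n,n}$ is exactly the set of $\lambda$ such that the support function $\Pi - \lambda H$ lies in the nef cone of $\Sigma_{n,n}$. Equivalently, since wall-crossing is linear in the support function,
\[
\Pi_{n,n}/H_{n,n} \;=\; \min_{\B} \frac{I_{\B}(\Pi)}{I_{\B}(H)},
\]
where the minimum ranges over all walls $\B$ of $\Sigma_{n,n}$ with $I_{\B}(H) > 0$.

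The second step is to read off the values of $I_\B(\Pi)$ and $I_\B(H)$ from the direct computations of Propositions \ref{prop:bipermineqs} and \ref{prop:harmonicineqs}. For supermodular walls, we have three subcases with $(I_\B(\Pi), I_\B(H))$ equal to $(2,1)$, $(2,0)$, and $(4,1)$, giving ratios $2$, $+\infty$, and $4$. For up-down walls we have $I_\B(\Pi) = \sum a_{2i+1}^2 \geq n$ while $I_\B(H) = 1$, giving a ratio at least $n \geq 2$. The minimum ratio is therefore $2$, achieved in case (i) of the supermodular walls.

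The third step is then purely bookkeeping: verify that $\lambda = 2$ is admissible by checking each wall-crossing inequality for $\Pi - 2H$:
\begin{align*}
I_\B(\Pi - 2H) &= 2 - 2 \cdot 1 = 0 \quad \text{(supermodular case (i))}, \\
I_\B(\Pi - 2H) &= 2 - 2 \cdot 0 = 2 \quad \text{(supermodular case (ii))}, \\
I_\B(\Pi - 2H) &= 4 - 2 \cdot 1 = 2 \quad \text{(supermodular case (iii))}, \\
I_\B(\Pi - 2H) &\geq n - 2 \geq 0 \quad \text{(up-down, using } n \geq 2\text{)}.
\end{align*}
All inequalities hold weakly, so $2H_{n,n}$ is a Minkowski summand of $\Pi_{n,n}$. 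Combined with the upper bound from supermodular case (i), this yields $\Pi_{n,n}/H_{n,n} = 2$.

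The only conceptual obstacle is justifying the first displayed formula, i.e., that the Minkowski quotient for deformations of a fan is exactly the minimum ratio of wall-crossing values; but this is immediate from the Wall-Crossing Criterion applied to $\Pi - \lambda H$, since the wall-crossing functional $h \mapsto I_\B(h)$ is linear. The rest of the argument is essentially the comment at the end of Proposition \ref{prop:bipermineqs} (which already highlighted that the bound $I_\B(\Pi) \geq n$ is the relevant one for up-down walls) together with the exact values of $I_\B(\Pi)$ in the supermodular cases, whose tightness at $2$ is what forces the answer to be exactly $2$ rather than a larger number.
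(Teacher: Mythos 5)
Your proposal is correct and follows essentially the same route as the paper: both reduce the Minkowski quotient to the condition that $\Pi - \lambda H$ satisfy the wall-crossing inequalities (equivalently, your minimum-of-ratios formulation), and both read off the exact values $I_\B(\Pi) \in \{2,2,4\}$, $I_\B(H) \in \{1,0,1\}$ for the supermodular walls and the bounds $I_\B(\Pi) \geq n$, $I_\B(H) = 1$ for the up-down walls from Propositions \ref{prop:bipermineqs} and \ref{prop:harmonicineqs}, with tightness in supermodular case (i) forcing the answer $2$.
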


\begin{proof}
If $\lambda H_{n,n}$ is a Minkowski summand of $\Pi_{n,n}$, we have $\Pi_{n,n} = \lambda H_{n,n} + R$ for a polytope $R$. Since $R$ is a Minkowski summand of $\Pi_{n,n}$, its normal fan coarsens the bipermutahedral fan, so $R$ is in the nef cone of the bipermutahedral fan $\Sigma_{n,n}$. It follows that its support function
\[
R: = \Pi - \lambda H
\]
satisfies the wall-crossing inequalities. Conversely, if $R = \Pi - \lambda H$ satisfies the wall-crossing inequalities, then it is the support function of a deformation $R$ of the bipermutahedron such that $\Pi_{n,n} = R + \lambda H_{n,n}$, so $\lambda H_{n,n}$ is a Minkowski summand of $\Pi_{n,n}$. We conclude that
\[
P/Q = \max \{\lambda \geq 0 \, : \, R = \Pi - \lambda H \text{ satisfies the wall-crossing inequalities of } \Sigma_{n,n}\}
\]
Looking back at the proofs of Propositions \ref{prop:bipermineqs} and \ref{prop:harmonicineqs} we obtain the following.

\smallskip

\noindent \textsf{A) Supermodular inequalities:}
In the three cases (i), (ii), (iii), we have
\[
(i) \,\, I_{\B}(R) = 2 - \lambda, \qquad
(ii) \,\, I_{\B}(R) = 2 , \qquad
(iii) \,\, I_{\B}(R) = 4 - \lambda, \qquad
\]

\noindent \textsf{B) Up-down inequalities:} We have
\[
(i) \,\, I_{\B}(R) \geq n - \lambda.
\]

\noindent
The largest $\lambda$ for which these numbers are non-negative is $2$, as desired.
\end{proof}

\section{Acknowledgments}

I would like to extend my warm gratitude to Graham Denham and June Huh for the very rewarding collaboration \cite{ArdilaDenhamHuh1} that led to the construction of the harmonic polytope and the bipermutahedron, and to Laura Escobar \cite{ArdilaEscobar} for the very rewarding collaboration that taught me most of what I know about the harmonic polytope. Several of the results in this paper were inspired by conversations with them.
In particular, Graham asked the question that led to Proposition \ref{prop:symmetry}, and programs that Graham and Laura wrote helped me discover Theorems \ref{th:biEulerianseries} and \ref{th:Pi/H}. I am also grateful to Katharina Jochemko, who taught me Wagner's technique used to prove Theorem \ref{th:realrooted}. Early conversations with M\'onica Blanco and Francisco Santos helped solidify my understanding of the bipermutahedron. 
Finally, as is often the case, the Online Encyclopedia of Integer Sequences at \texttt{www.oeis.org} was a very valuable tool for this project.

\small

\bibliographystyle{habbrv}
\bibliography{ref}

\end{document}